\newtheorem{theorem}{Theorem}[section]
\newtheorem{remark}{Remark}[section]
\newtheorem{corollary}{Corollary}[section]
\newtheorem{assumption}{Assumption}
\newtheorem{example}{Example}
\def\COMMA{\,,}
\def\PERIOD{\,.}
\def\VAR{\mathrm{Var}}
\def\LR{\mathrm{LR}}
\def\CLR{\mathrm{CLR}}
\begin{document}
%
%

%
\title{Steady state sensitivity analysis of continuous time  Markov chains}
%
\author{Ting Wang and Petr {Plech\'{a}\v{c}}}
\address{Department of Mathematical Sciences \\University of Delaware\\ Newark, DE 19716}
\email{tingw@udel.edu, plechac@udel.edu}



\begin{abstract}
In this paper we study Monte Carlo estimators based on the likelihood ratio approach for steady-state sensitivity.
 We first extend the result of Glynn and Olvera-Cravioto [doi: 10.1287/stsy.2018.002] to the setting of continuous time Markov chains with a countable state space which include models such as stochastic reaction kinetics and kinetic Monte Carlo lattice systems.  
 We show that the variance of the centered likelihood ratio estimators does not grow in time. This result suggests that the centered likelihood ratio estimators should be favored for sensitivity analysis when the mixing time of the underlying continuous time Markov chain is large, which is typically the case when systems exhibit multiscale behavior. We demonstrate a practical implication of this analysis with  two numerical benchmarks of biochemical reaction networks.
\end{abstract}

\maketitle

\section{Introduction}
Parametric sensitivity analysis (gradient estimation), which studies the sensitivity of an observable to variations in system parameters, has wide applications in sciences and engineering.
Oftentimes, parametric sensitivity analysis is performed in order to optimize certain gains or costs associated with a given system.
For example, in operations research such as queuing systems, the service rate is often optimized through sensitivity analysis to provide better customer service. 
In this paper, our primary focus will be the steady-state sensitivity analysis of continuous time Markov chains (CTMCs) $\{X(t, c)\}_{t \geq 0}$ on a countable state space, where $c$ is a system parameter.
Systems that are modeled by such processes are often referred to as discrete event systems \cite{asmussen2007stochastic}.  
For $X(t, c)$ that is ergodic the steady-state expectation $\pi_c(f) \triangleq \sum_x f(x) \pi_c(x)$ is typically approximated by the ergodic average
\begin{equation}\label{eqn:ergodic-average}
  \pi_c(f) = \lim_{t\to\infty}\frac{1}{t}\int_0^t f(X(s,c))\,ds.
\end{equation}
Our goal is to compute the gradient of the steady-state expectation of an observable $f$ with respect to $c$ at $c = c^*$, i.e.,
\begin{equation*}
  \left.\frac{\partial}{\partial c}\right|_{c=c^*} \pi_c(f).
\end{equation*}
Note that we do not impose detailed balance assumption on $X(t, c)$.

The steady-state sensitivity estimation is of particular importance in applications such as surface chemistry \cite{chatterjee2007overview}, and systems biology \cite{kitano2002computational, mcadams1997stochastic}, where evolution of the system state is modeled as a CTMC.
Several Monte Carlo based simulation methodologies have been designed for sensitivity analysis.
These approaches can be roughly classified into three categories.
The first approach approximates the derivative $\partial_c\pi_c(f)$ by a finite difference scheme following from the Taylor expansion at $c^*+h$,
\[
\left.\frac{\partial}{\partial c}\right|_{c=c^*} \pi_c(f) = \lim_{h \to \infty} \frac{\pi_{c^*+h}(f) - \pi_{c^*}(f)}{h},
\]
where the $\pi_{c^*}$ and $\pi_{c^*+h}$ are approximated by \eqref{eqn:ergodic-average}.
However, this approach is biased due to the truncation error in the finite difference approximation. Furthermore, small $h$ often leads to large variance of the estimator \cite{anderson2012efficient, rathinam2010efficient}.
In order to reduce the variance, the perturbed process $X(t, c^*+h)$ and 
the nominal process $X(t, c^*)$ are often coupled in suitable ways
\cite{anderson2012efficient,glynn2017likelihood,rathinam2010efficient}.
The second approach is the infinitesimal perturbation analysis (IPA) \cite{glasserman1991gradient}, which involves taking a derivative of the sample path. 
Often the regenerative structure is assumed and the sensitivity is computed through differentiating 
the regenerative ratio \cite{glasserman1991strongly}. 
The third approach which is also the focus of our study is the likelihood ratio (LR) method (also known as the Girsanov transform method)
\cite{ glynn1990likelihood,plyasunov2007efficient,wang2016efficiency, warren2012steady}.
Instead of the sample path differentiation as in IPA, LR differentiates the Radon-Nikodym derivative $L(t,c)$ between the nominal path-space measure $\mathbb{P}_{c}$ and the perturbed path-space measure $\mathbb{P}_{c^*}$.

Sensitivity analysis based on the LR have been studied extensively for the finite time horizon case \cite{glynn1990likelihood,plyasunov2007efficient,wang2016efficiency}.
However, in the steady-state case, the LR approach has been mainly used in a heuristic manner.
For example, in \cite{arampatzis2016efficient,hashemi2016stochastic}, the LR estimator
\[
\frac{1}{t}\int_0^t f(X(s,c^*)) \,ds \left.\frac{\partial}{\partial c}\right|_{c=c^*}L(t, c)
\]
is applied to approximate the sensitivity.
Furthermore, $\pi_{c^*}(f)$ is used as a control variate to reduce the variance which leads to 
the centered likelihood ratio (CLR) estimator
\[
\frac{1}{t}\int_0^t (f(X(s,c^*)) - \pi_{c^*}(f)) \,ds \left.\frac{\partial}{\partial c}\right|_{c=c^*}L(t, c)\,.
\]

\medskip

In this paper we address the following questions
regarding the LR approach to the steady-state sensitivity.
First, do the LR estimators converge to the correct sensitivity and what is the notion of convergence?
Second, what are the general sufficient conditions for the convergence?
Finally, the main contribution of the paper, how does the estimator's variance grow in terms of time $t$?

Recently, Glynn and Olvera-Cravioto \cite{glynn2017likelihood}
have answered the first two questions for the discrete time Markov chain (DTMC) using the Poisson equation approach, which has a natural link with the centered LR estimator.
Following the same approach, we first present an extension of their results to the CTMC setting. Oftentimes, extending results from the DTMC to those for the CTMC is through constructing and analyzing the embedded chain. 
However, thanks to the integration by parts formula for semimartingales and the Burkholder--Davis--Gundy (BDG) inequality, we find it convenient to establish the results directly in the CTMC setting. 
The fundamental assumption for studying the long time behavior of CTMC is the Foster-Lyapunov drift condition \cite{MT1993stability}, which is also the main
assumption in this work.
These techniques allow us to address the first two questions, answers to which are essentially an extension of the results in \cite{glynn2017likelihood}.
Furthermore, in the appendix, we provide a verifiable condition for the existence of Lyapunov function.
Finally, the answer to the last question is of 
particular importance from the computational perspective
since numerical methods that accumulate variance along the history of a simulation often require a large number of samples to obtain statistically reliable results.
In \cite{arampatzis2016efficient, hashemi2016stochastic}, it has been numerically observed that the CLR estimator tends to have a constant variance in time.
In this work, we show that this numerical observation can be rigorously justified under the mathematical framework of this paper.

\medskip

This article is organized as follows. 
In Section~\ref{sec:LR-sensitivity} we present the mathematical 
framework for the LR approach and set up technical assumptions
in the CTMC setting.
In Section~\ref{sec:unbiased-estimator} we present the first main result proving
that the derived LR estimators for steady-state sensitivity are asymptotically unbiased. 
In Section~\ref{sec:weak-limit} we show that the centered LR estimators have well-defined limiting distributions whereas the standard LR estimators do not. 
In Section~\ref{sec:var-analysis}, we prove that the order of LR estimators grows like $\mathcal{O}(t)$ while that of the {\it centered} version {\it does not} grow in $t$. 
This suggests that the centered LR estimators are suitable for situations when long time simulations are needed (e.g., systems involving rare events).
Finally in Section~\ref{sec:numerics},
we provide numerical examples where a variance reduction up to the order of $10^4$ to $10^5$ is observed by using the centered LR estimators.


\section{Mathematical framework}\label{sec:LR-sensitivity}
\subsection{The continuous time Markov chains and the sensitivity problem}
We study models described by a CTMC $\{X(t)\}_{t \geq 0}$ defined on a probability space $(\Omega, \mathcal{F}, \mathbb{P})$ with values in a countable state space $E$. 
Let $\mathcal{F}_t$ be the natural filtration generated by the process up to time $t$. 
Given an initial distribution, the CTMC is completely characterized by its associated semigroup $\{T(t)\}$ which is defined as
\[(T(t) f) (x) \triangleq \mathbb{E}^x\{f(X(t))\}\]
for any measurable $f: E \to \mathbb{R}$ and the initial state $x \in E$. 
The generator of the semigroup $\{T(t)\}$ is the linear operator $\mathcal{L}$
defined by 
\[
\mathcal{L}f = \lim_{t \to 0}\frac{1}{t} \{T(t)f - f\}
\]
for all measurable $f:E\to \mathbb{R}$ such that the above limit exists.
Since the state space $E$ is discrete, the generator $\mathcal{L}$ can be
equivalently considered 
as an infinite dimensional matrix whose row $x$ and column $y$ entry $\mathcal{L}(x, y)$ specifies the transition rate for $X(t)$ to transition from the state $x$ to the state $y$. Hence, the generator is also known as the transition (jump) rate matrix or $Q$-matrix. 

As an application of the presented analysis to such a CTMC-based model we consider stochastic reaction networks \cite{gillespie2007stochastic}.
Generally, a stochastic reaction network which consists of $m$ reactions and $n$ species $\{S_1, \cdots, S_n\}$ is of the form
\[
\sum_{i = 1}^n \nu_{ij}^{-} S_i \to \sum_{i=1}^n \nu_{ij}^{+}S_i, \qquad j =  1, \ldots, m\,.
\]
Here $\nu_{ij}^{+}$ and $\nu_{ij}^-$ are nonnegative integers that correspond to the number of $S_i$ molecules
that are produced and consumed respectively in the $j$-th reaction channel. 
Hence the stoichiometric vector (i.e., the net change) with respect to the $j$-th reaction is
\[\nu_j = \nu_j^+ - \nu_j^-.\]
The $n$-dimensional state vector $X(t, c)$ characterizes the state of the system
where each entry $X_i(t, c)$ represents the number of molecules of the species
$S_i$ at time $t$ and $c$ is a vector of parameters.
  We assume that $X$ is {\em c\`adl\`ag}, i.e.\ paths of $X$ are right-continuous with left limits and hence, if the reaction channel $j$ fires at time $t$, then $X(t, c)=X(t-, c)+\nu_j$. 
For $j =1,\ldots,m$ we denote by $R_j(t, c)$ the number of firings of the $j$-th
reaction channel during $(0,t]$. 
We note that $R(0)=0$ and $R_j(t, c)-R_j(t-, c)$ is either $0$ or $1$ depending on whether the $j$-th reaction occurs at time $t$.

The process $X(t, c)$ is assumed to be Markovian on the probability space 
$(\Omega, \mathcal{F}, \mathbb{P})$ and
associated with each reaction channel is an intensity function (jump rate) $a_j(x, c), j = 1, \ldots, m$, which is such that 
\[\lim_{h\to 0}\frac{1}{h}\mathbb{P}(X(t+h, c) = x + \nu_j | X(t, c) = x) = a_j(x, c).  
\]
From the intensity functions we can construct the generator matrix $\mathcal{L}_c$ of the Markov chain $X(t,c)$ such that 
\begin{equation}\label{eqn:generator-1}
\mathcal{L}_c(x, y) = \begin{cases}
a_j(x, c), & y = x + \nu_j;\\
-\sum_{j=1}^m a_j(x, c), & y = x;\\
0                        & \text{otherwise.}
\end{cases}
\end{equation}
By a slight abuse of notation, $\mathcal{L}_c$ will also denote the infinitesimal generator operator of the process $X(t, c)$, 
which is the linear operator of the form
\begin{equation}\label{eqn:generator-2}
(\mathcal{L}_c f)(x) = \sum_{j=1}^m a_j(x, c) \Delta_j f(x),
\end{equation}
where $\Delta_j f: E \to \mathbb{R}$ is the
$j$-th increment function
\begin{equation}\label{eqn:increment-function}
\Delta_j f(x) \triangleq f(x + \nu_j) - f(x),
\end{equation}
which is the increment to the observable $f$ due to one $j$-th reaction.
For a given observable $f$ and the generator $\mathcal{L}_c$,
the solution $\hat{f}_c$ to the Poisson equation
\begin{equation}\label{eqn:Possion}
-\mathcal{L}_{c}\hat{f}_c = f - \pi_c(f)
\end{equation}
is used to construct 
\begin{equation}\label{eqn:M}
    M(t, c) \triangleq \hat{f}(X(t, c)) - \hat{f}(X(0, c)) - \int_0^{t} \mathcal{L}_c\hat{f}_c(X(s, c)) \,ds,
\end{equation}
which is an $\mathcal{F}_t$-local martingale by Dynkin's lemma \cite{ethier2009markov}.
Finally, for $j = 1,\ldots, m$ we define another $\mathcal{F}_t$-local martingale 
\begin{equation}\label{eqn:Y}
    Y_j (t) = R_j(t) - \int_0^t a_j(X(s))\, ds
\end{equation}
provided that $\int_0^t a_j(X(s))\, ds < \infty$ for all $t$ \cite{bremaud1981point}.

Often we are interested in estimating the steady-state sensitivity 
\[
\nabla_{c=c^*} \pi_c(f),
\]
where $f: E \to \mathbb{R}$ is an observable.
Without loss of generality, 
we only consider the partial derivative with respect to $c_1$ (the first component of the vector $c$), 
i.e., 
\begin{equation}\label{eqn:sensitivity}
\left.\frac{\partial}{\partial c_1}\right|_{c_1=c_1^*} \pi_c(f)\PERIOD
\end{equation}
\begin{remark}
{\rm 
For the ease of exposition, from now on, we slightly abuse the notation by writing $c_1$ as $c$ in the above definition of sensitivity. 
Furthermore, unless otherwise specified, we omit $c^*$ when we write quantities dependent on $c$ and  evaluated at  $c^*$, e.g., $X(t, c^*)$, is abbreviated to $X(t)$ or $\pi_{c^*}(f)$ becomes simply $\pi(f)$, etc.
}
\end{remark}

\subsection{Likelihood ratio estimators}
We formally derive the LR estimators in this section. 
Let $\mathbb{P}_c$ be the path-space probability measure induced by the process $X(t, c)$.
Suppose that $\mathbb{P}_c$ is absolutely continuous with respect to $\mathbb{P}_{c^*}$.
Then we can define their Radon-Nikodym derivative 
\[
L(t, c) \triangleq \left.\frac{d\mathbb{P}_c}{d\mathbb{P}_{c^*}}\right|_{\mathcal{F}_t}
\]
as the LR process. 
Hence by definition
\[
\mathbb{E}_c\left\{\frac{1}{t}\int_0^t f(X(s)) \,ds\right\} = \mathbb{E}_{c^*}\left\{\frac{1}{t}\int_0^t f(X(s)) \,ds \,L(t, c)\right\}\COMMA
\]
where $\mathbb{E}_c$ is the expectation corresponding to the path-space measure $\mathbb{P}_c$.
Assuming that we can differentiate inside the expectation $\mathbb{E}_{c^*}$ we have 
\begin{equation}\label{eqn:swap-E-diff}
    \left.\frac{\partial}{\partial c}\right|_{c=c^*}\mathbb{E}_c\left\{\frac{1}{t}\int_0^t f(X(s)) \,ds\right\} = \mathbb{E}_{c^*}\left\{\frac{1}{t}\int_0^t f(X(s)) \,ds \,Z(t)\right\}\COMMA
\end{equation}
where 
$Z(t) \triangleq \left.\frac{\partial}{\partial c}\right|_{c=c^*}L(t, c)$ is the weight process for the sensitivity. 
The explicit formula for $Z(t)$ can be easily derived as
 \begin{equation}\label{eqn:Z-full-form}
 \begin{split}
 Z(t)  = \sum_{j = 1}^m \int_0^t \frac{\frac{\partial a_j}{\partial c}(X(s-))}{a_j(X(s-))} dR_j(s) - \sum_{j = 1}^m \int_0^t\frac{\partial a_j}{\partial c}(X(s)) ds\PERIOD
\end{split} 
 \end{equation}
and it is, in general,  a zero mean $\mathcal{F}_t$-local martingale.

The idea of the LR approach is simply using the right hand side of \eqref{eqn:swap-E-diff} to approximate the sensitivity.
Hence, the standard LR estimator is 
\begin{equation}\label{eqn:LR}
    \mathcal{S}_{\LR}(t) = \frac{1}{t}\int_0^t f(X(s))ds\, Z(t)\PERIOD
\end{equation}
By the standard Monte Carlo method we sample the above LR estimator for a fixed time $t$ and 
then use the sample average to approximate the sensitivity. 
Since the efficiency of a Monte Carlo simulation depends on the estimator variance, 
in order to reduce the variance one can apply the control variate approach \cite{asmussen2007stochastic}, which leads to the CLR estimator 
    \begin{equation}\label{eqn:CLR}
      \mathcal{S}_{\CLR}(t) = \frac{1}{t}\int_0^t \left(f(X(s)) - \pi(f)\right)ds\, Z(t)\PERIOD
    \end{equation}
It can be shown that $\pi(f)$ is the optimal control variate coefficient \cite{glynn2017likelihood}.
Note that by properties of the conditional expectation and the fact that $Z(t)$ is a zero mean $\mathcal{F}_t$-local martingale,  
\[
\mathbb{E}\left\{\int_0^t f(X(s)) (Z(t) - Z(s)) ds\right\} = \int_0^t\mathbb{E}\left\{ f(X(s)) \mathbb{E}\{(Z(t) - Z(s)) | \mathcal{F}_s \}\right\}ds = 0\PERIOD
\]
Hence, we derive an alternative form of the LR estimator \cite{glynn2017likelihood}, which we refer to as
the integral type LR estimator (int-LR)
\begin{equation}\label{eqn:new-LR}
    \tilde{\mathcal{S}}_{\LR}(t) = \frac{1}{t}\int_0^t f(X(s)) Z(s) \,ds \PERIOD
\end{equation}
The centering trick can be used as well to derive the integral type CLR estimator (int-CLR)
\begin{equation}\label{eqn:new-CLR}
    \tilde{\mathcal{S}}_{\CLR}(t) = \frac{1}{t}\int_0^t (f(X(s)) - \pi(f)) Z(s)\, ds\PERIOD
\end{equation}
We will focus on comparing these four estimators from several different perspectives in the next few sections. 

\begin{remark}
{\rm
For ease of presentation, from now on we assume that the intensities are of the form 
$a_j(c, x) = c_j b_j(x)$ for $j = 1, \dots, m$, 
where $b_j : E \to \mathbb{R}_+$.
Note that the mass action type intensity is of this form.
In this case, the weight process (with respect to $c_1$) \eqref{eqn:Z-full-form} can be simply written as 
\begin{equation}\label{eqn:Z}
    Z(t) \triangleq \frac{1}{c_1}\left(R_1(t) - \int_0^t a_1(X(s))\, ds\right),
\end{equation}
which is a zero mean $\mathcal{F}_t$-martingale if $\mathbb{E}_{c^*}\{\int_0^t a_1(X(s)) \, ds\} < \infty$ \cite{bremaud1981point}.
We point out that our results presented here do not rely on the specific form 
of the intensity. 
For a general form of the intensity as in \eqref{eqn:Z-full-form}, one just needs to impose certain regularity assumptions on $\frac{\partial a_j}{\partial c}(X(s))$ and the same analysis presented in the following sections carries out.
}
\end{remark}

\subsection{Assumptions and preparatory results}
To facilitate the analysis we first introduce the $V$-norm. 
For a signed measure $\mu$ we denote the integral of any function $f$ against $\mu$ to be
\begin{equation}\label{eqn:mu-integral}
    \mu(f) = \sum_{x \in E} f(x) \mu(x)\PERIOD
\end{equation}
For a given function $V: E \to [1, \infty)$, we define the $V$-norm of a measurable function $f$ as 
\begin{equation}\label{eqn:V-norm-function}
    |f|_V = \sup_{x \in E} \frac{|f(x)|}{V(x)}\PERIOD
\end{equation}
Similarly, we define the $V$-norm of the measure $\mu$ to be
\begin{equation}\label{eqn:V-norm-measure}
    \|\mu\|_V = \sup_{f: |f| \leq V} |\mu(f)|\PERIOD
\end{equation}
Finally, the $V$-norm of a linear operator $\mathcal{L}$ on $E \times E$ is defined by
\begin{equation}\label{eqn:V-norm-generator}
    \|\mathcal{L}\|_V = \sup_{x \in E} \frac{1}{V(x)} \sum_{y\in E} |\mathcal{L}(x, y)| V(y)\PERIOD
\end{equation}
We make the following assumptions throughout the paper.
\begin{assumption}\label{assume:irreducible}
For any $c \in B_{\epsilon}(c^*)$, the CTMC $X(t, c)$ is irreducible.
\end{assumption}

It is often difficult to verify the irreducibility condition due to the infinity of
the state space $E$ and the presence of conservation relations among species populations.
Nevertheless, we point out that a linear algebraic procedure has been developed recently by Gupta and Khammash \cite{gupta2018computational} to identify irreducible classes in a systematic way.

\begin{assumption}\label{assume:drift-condition}
There exist a norm like function $V: E \to [1, \infty)$, a finite set $D$,
and some constants $\alpha_1 >0$ and $ 0<\alpha_2 < \infty$ such that 
\begin{equation}\label{eqn:V-drift}
    \mathcal{L}_c V(x) \leq -\alpha_1 V(x) + \alpha_2\mathbbm{1}_D(x), 
    \qquad x \in E,
\end{equation}
for all $c \in B_{\epsilon}(c^*)$.
\end{assumption}

The Foster-Lyapunov inequality \eqref{eqn:V-drift} in Assumption~\ref{assume:drift-condition} is a criterion for proving ergodicity of a CTMC. 
The following theorem is a direct consequence of Theorems $4.2$ and $7.1$ in \cite{MT1993stability}.
\begin{theorem}\label{thm:V-ergodic}
Given Assumption~\ref{assume:drift-condition}, for any fixed $c \in B_{\epsilon}(c^*)$ and initial state $x \in E$, there exist a unique probability measure $\pi_c$, a constant $\gamma< 0$, and a function $C(x) < \infty$ such that 
\begin{equation}
    \|P_t^c(x, \cdot) - \pi_c\|_V \leq C(x) e^{\gamma t}
\end{equation}
for all $t > 0$, where $P_t^c(x, \cdot) = \mathbb{P}^x(X(t, c) \in \cdot)$ is the distribution of $X(t, c)$ that starts at $x$.
Moreover, $X(t, c)$ is positive recurrent and $\pi_c (V)$ is finite.
\end{theorem}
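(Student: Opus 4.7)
The plan is to recognize the statement as a direct application of the Foster--Lyapunov ergodicity theory developed by Meyn and Tweedie. Specifically, Theorem 4.2 of \cite{MT1993stability} gives positive recurrence and existence of a unique invariant probability measure $\pi_c$ under a geometric drift condition, while Theorem 7.1 of the same reference upgrades this to the $V$-uniform exponential convergence estimate. The task therefore reduces to verifying the hypotheses of those two theorems in the present CTMC setting.

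First I would establish $\psi$-irreducibility, which on the countable space $E$ is equivalent to Assumption \ref{assume:irreducible} with $\psi$ taken to be counting measure. Second, I would verify that the finite set $D$ appearing in the drift inequality is petite for the resolvent chain: in a $\psi$-irreducible CTMC on a countable state space, any finite set is petite because the resolvent $\int_0^\infty e^{-t} P^c_t(x,\cdot)\,dt$ charges every state of $E$ by irreducibility, and one can take a uniform nontrivial lower bound over the finite set $D$. Third, I would rule out explosion of $X(t,c)$: letting $\tau_n$ denote the first exit time from the sublevel set $\{V \leq n\}$, localized Dynkin applied to $V$ together with the drift inequality gives
\[
\mathbb{E}^x[V(X(t\wedge \tau_n))] \leq V(x) + \alpha_2 t,
\]
so by Markov's inequality $\mathbb{P}^x(\tau_n \leq t) \leq (V(x) + \alpha_2 t)/n \to 0$, and because $V$ is norm-like, explosion is ruled out. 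The geometric drift inequality itself is Assumption \ref{assume:drift-condition}. With these three ingredients verified, Meyn--Tweedie's Theorem 4.2 yields positive recurrence and a unique invariant probability measure $\pi_c$, and Theorem 7.1 yields the rate bound $\|P^c_t(x,\cdot) - \pi_c\|_V \leq C(x) e^{\gamma t}$ with $\gamma < 0$.

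Finally, to establish $\pi_c(V) < \infty$, I would either extract it as a byproduct of the $V$-uniform ergodicity (if $\|P^c_t(x,\cdot)-\pi_c\|_V$ is finite for some $t$ and $x$, then $\pi_c(V)$ is finite) or argue directly by integrating a truncated drift inequality $\mathcal{L}_c(V \wedge n) \leq -\alpha_1 (V\wedge n) + \alpha_2 \mathbbm{1}_D + \text{boundary term}$ against $\pi_c$, using the invariance identity $\pi_c \mathcal{L}_c = 0$, and letting $n \to \infty$ via monotone convergence to obtain $\pi_c(V) \leq \alpha_2/\alpha_1$. The main obstacle is this last step: the natural argument $\pi_c(\mathcal{L}_c V) = 0 \Rightarrow \alpha_1 \pi_c(V) \leq \alpha_2$ is circular without an a priori moment bound, so the truncation must be handled carefully. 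Petiteness and non-explosion are by contrast routine once the norm-like Lyapunov structure is in place.
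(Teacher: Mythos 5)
Your proposal is correct and follows the same route as the paper, which offers no proof beyond citing Theorems 4.2 and 7.1 of Meyn and Tweedie; your verification of $\psi$-irreducibility, petiteness of $D$, non-explosion, and the $\pi_c(V)<\infty$ truncation argument simply makes explicit the hypotheses the paper leaves implicit in that citation.
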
 

\begin{remark}
{\rm 
The above theorem implies that $X(t, c)$ is ergodic under the $V$-norm, i.e., for any function $g: E \to \mathbb{R}$ with bounded $V$-norm (i.e., $|g|_{V} < \infty$),
\begin{equation}\label{eqn:V-ergodic}
    \mathbb{E}_x g(X(t, c)) \to \pi_c(g) 
\end{equation}
as $t \to \infty$.
}
\end{remark}

%
\begin{assumption}\label{assume:intensity-bounded}
For each $j = 1, \ldots, m$, $|a_j|_{\sqrt{V}} < \infty$, i.e., the intensities are $\sqrt{V}$-bounded.
\end{assumption}

\begin{remark}
{\rm 
Note that this assumption implies that $a_j(x)$  is $V$-bounded as well since $\sqrt{V} \leq V$.
Hence, by Theorem~\ref{thm:V-ergodic},  
\begin{equation}\label{eqn:intensity-ergodic}
    \mathbb{E} a_j(X(t)) \to \pi(a_j)
\end{equation} 
as $t \to \infty$.
}
\end{remark}

We assume the following regularity condition on the infinitesimal
generators. 
\begin{assumption}\label{assume:generator}
$\lim_{h \to 0} \| \mathcal{L}_{c^*} - \mathcal{L}_{c^* + h}\|_V = 0.$
\end{assumption}
\begin{remark}
{\rm 
Since the intensities are assumed to be of the form $a_j(x, c) = c_j b_j(x)$ and the perturbation is only in $c_1$, it can be readily verified that Assumption~\ref{assume:generator} is equivalent to the following regularity condition:
\begin{equation}\label{eqn:generator-regularity}
    \sup_{x\in E} b_1(x) \left(\frac{V(x+\nu_1)}{V(x)} + 1\right) < \infty\PERIOD
\end{equation}
}
\end{remark}

In order to carry out rigorous analysis we also impose regularity conditions on observables. 
\begin{assumption}\label{assume:observable-bounded}
The observable $f$ is $\sqrt{V}$-bounded, i.e.,  $|f|_{\sqrt{V}} < \infty$.
\end{assumption}

\begin{assumption}\label{assume:increment-bounded}
There exists a constant $K$ such that $|\Delta_j f(x)| \leq K$ for $j = 1, \ldots, m$ and $x \in E$.
\end{assumption}

\begin{remark}
{\rm 
Under Assumption~\ref{assume:increment-bounded}, it follows that the increment function $\Delta_j f$ satisfies
\begin{equation}\label{eqn:Lipschitz}
    |\Delta_j f(x)| = |f(x + \nu_j) - f(x)| \leq K  \leq  K  V^{\frac{1}{4}}(x)
\end{equation}
for $j = 1, \ldots, m$ and $x \in E$.
That is, each increment function $\Delta_j f$ is $V^{1/4}$-bounded.
The reason why we choose $V^{1/4}$ to bound the increment functions will be made clear in the proof of 
Theorem~\ref{thm:joint-weak-convergence}.
Note that this assumption does not appear in the DTMC setting in \cite{glynn2017likelihood}. 
However, this assumption seems to be
necessary for us to carry out the analysis in the continuous time setting and carry through rigorous proofs. 
Nevertheless, this assumption is reasonable since it includes most observables such as the indicator function and Lipschitz functions.
}
\end{remark}

\begin{example}
We give a concrete example that satisfies the above assumptions. 
Consider the simple reversible isomerization network 
\[S_1 \xrightarrow{c_1} S_2, \quad S_2 \xrightarrow{c_1} S_1\]
with initial population $[1, 0]$. 
The system is clearly irreducible. 
Furthermore, it can be readily verified that the function $V(x) = 1 + x_1 x_2$ satisfies 
Assumptions~\ref{assume:drift-condition} and~\ref{assume:intensity-bounded}. 
The regularity condition \eqref{eqn:generator-regularity} is trivial in this example since the state space $E = \{[0, 1], [1, 0]\}$ is finite. 
Finally, for observables $f$ that are subquadratic and Lipschitz (e.g., $f(x) = \sin x$), Assumptions~\ref{assume:observable-bounded} and~\ref{assume:increment-bounded} are trivially satisfied as well.
\end{example}

Before we proceed to the next section, we briefly discuss the above running assumptions 
in the setting of the Hill dynamics. 
In the literature of stochastic chemical kinetics there exists another important form of the intensity function 
\[
a_j(x, c) = \frac{x_j^n}{c_j + x_j^n}\COMMA
\]
which is known as the form of the Hill dynamics.
Note that the above assumptions are more verifiable in the Hill dynamics setting because each intensity $a_j$ is uniformly bounded.
For instance, the regularity condition~\eqref{eqn:generator-regularity} becomes 
\[
\sup_{x \in E} \frac{V(x+\nu_1)}{V(x)} < \infty,
\] 
which is more natural than that of the mass action setting.

\subsection{The solution to the Poisson equation}
Before we start the analysis of the four LR estimators
we need the following important result regarding the solution of the Poisson equation \eqref{eqn:Possion}.
This result will be used frequently in what follows. However, we omit the proof since it involves concepts that are beyond the scope of this paper. 
An interested reader is referred to \cite{glynn1996liapounov} for details of the proof.


\begin{theorem}\label{thm:Poisson-solution}
Suppose that the Foster-Lyapunov drift condition \eqref{eqn:V-drift} in Assumption~\ref{assume:drift-condition} holds for a function $V$.
Then $X(t)$ is positive recurrent.
Furthermore, for each fixed $p \in \mathbb{Z}_+$, we have $\pi (V^{1/p}) < \infty$, and 
for any function $g: E \to \mathbb{R}$ satisfying $|g|_{V^{1/p}} < \infty$, the Poisson equation \eqref{eqn:Possion}
admits a solution $\hat{g}$ satisfying 
$|\hat{g}(x)| \leq K V(x)^{1/p}$ for some constant $K > 0$.
\end{theorem}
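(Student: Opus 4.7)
The plan is to construct the Poisson solution explicitly by the resolvent formula
\begin{equation*}
\hat{g}(x) \;=\; \int_0^\infty \bigl(T_t g(x) - \pi(g)\bigr)\,dt \PERIOD
\end{equation*}
Both the convergence of this integral and the desired $V^{1/p}$-bound will follow from an upgraded ergodicity estimate in the $V^{1/p}$-norm, so the main preliminary work is to show that $V^{1/p}$ itself satisfies a Foster--Lyapunov drift condition (with modified constants and a possibly enlarged petite set).

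First, I would pass from $V$ to $V^{1/p}$. Since $\varphi(u)=u^{1/p}$ is concave on $[1,\infty)$, the tangent inequality gives
\begin{equation*}
V^{1/p}(x+\nu_j) - V^{1/p}(x) \;\leq\; \tfrac{1}{p}\, V^{1/p-1}(x)\,\Delta_j V(x) \PERIOD
\end{equation*}
Multiplying by $a_j(x)\geq 0$, summing over $j$, and using Assumption~\ref{assume:drift-condition} yields
\begin{equation*}
\mathcal{L}_c V^{1/p}(x) \;\leq\; \tfrac{1}{p} V^{1/p-1}(x)\,\mathcal{L}_c V(x) \;\leq\; -\tfrac{\alpha_1}{p}\,V^{1/p}(x) + \tfrac{\alpha_2}{p}\,V^{1/p-1}(x)\mathbbm{1}_D(x) \PERIOD
\end{equation*}
Because $D$ is finite, the second term is bounded by a constant times $\mathbbm{1}_D(x)$, so $V^{1/p}$ inherits a Foster--Lyapunov drift condition. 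By Theorem~\ref{thm:V-ergodic} applied with $V^{1/p}$ in place of $V$, the chain is positive recurrent (already known), $\pi(V^{1/p})<\infty$, and
\begin{equation*}
\bigl\|P_t(x,\cdot) - \pi\bigr\|_{V^{1/p}} \;\leq\; C(x)\, e^{\gamma t} \COMMA
\end{equation*}
where standard Meyn--Tweedie theory (e.g. the $f$-norm ergodic theorem) ensures $C(x)\leq K_1 V^{1/p}(x)$ for some constant $K_1$.

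Second, with this bound in hand, for any $g$ with $|g|_{V^{1/p}}<\infty$ the defining formula for $\hat{g}$ gives
\begin{equation*}
\bigl|T_t g(x) - \pi(g)\bigr| \;\leq\; |g|_{V^{1/p}}\bigl\|P_t(x,\cdot)-\pi\bigr\|_{V^{1/p}} \;\leq\; K_1\,|g|_{V^{1/p}}\, V^{1/p}(x)\, e^{\gamma t} \COMMA
\end{equation*}
so the integral converges absolutely and $|\hat{g}(x)|\leq K V^{1/p}(x)$ with $K = K_1 |g|_{V^{1/p}}/|\gamma|$. Finally, to verify that $\hat{g}$ solves the Poisson equation I would use the semigroup identity
\begin{equation*}
T_s \hat{g}(x) - \hat{g}(x) \;=\; \int_s^\infty (T_u g(x) - \pi(g))\,du - \int_0^\infty (T_u g(x) - \pi(g))\,du \;=\; -\int_0^s (T_u g(x) - \pi(g))\,du \COMMA
\end{equation*}
differentiate at $s=0$, and read off $\mathcal{L}\hat{g}(x) = -(g(x)-\pi(g))$.

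The main obstacle is the passage from a drift condition on $V$ to one on $V^{1/p}$ together with the sharpened form $C(x)\leq K_1 V^{1/p}(x)$ of the ergodicity constant; the concavity argument above is short, but extracting the correct scaling of $C(x)$ requires invoking the $V$-uniform ergodicity machinery in \cite{MT1993stability}, which is the nontrivial analytic input that the authors outsource to \cite{glynn1996liapounov}. Everything else (convergence of the integral, verifying the Poisson identity, inheriting $\pi(V^{1/p})<\infty$) is then a routine consequence.
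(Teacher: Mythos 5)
The paper does not prove this theorem; it explicitly defers to the reference \cite{glynn1996liapounov}. Your argument is a valid, essentially self-contained alternative, and its key steps check out: the tangent-line inequality for the concave map $u\mapsto u^{1/p}$ on $[1,\infty)$ correctly transfers the drift condition from $V$ to $V^{1/p}$ (with constants $\alpha_1/p$, $\alpha_2/p$ and the same finite set $D$, since $V^{1/p-1}\leq 1$), and the $V$-uniform ergodic theorem of Meyn--Tweedie (the very Theorem $7.1$ of \cite{MT1993stability} that the paper already cites for Theorem~\ref{thm:V-ergodic}) does deliver the sharpened bound $C(x)\leq K_1 V^{1/p}(x)$ that you need, even though the paper's statement of Theorem~\ref{thm:V-ergodic} only records $C(x)<\infty$. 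From there the absolute convergence of the resolvent integral, the bound $|\hat g|\leq K V^{1/p}$, and the verification of the Poisson identity are as you describe. The route differs from the cited source: Glynn and Meyn construct the solution as an expected additive functional up to the hitting time of a recurrent set and bound it directly by the Lyapunov function, which works under weaker, non-geometric drift conditions; your route leans on geometric ergodicity in the $V^{1/p}$-norm, which is available here and gives a shorter argument. Two small points deserve one line each in a polished write-up: the interchange $T_s\int_0^\infty(\cdots)\,du=\int_0^\infty T_s(\cdots)\,du$ needs Fubini, justified by $T_sV^{1/p}(x)<\infty$ (itself a consequence of the drift condition); and identifying the right derivative of $s\mapsto T_s\hat g(x)$ at $s=0$ with the matrix action $\sum_j a_j(x)\Delta_j\hat g(x)$ uses non-explosiveness and right-continuity of paths, which hold under the standing assumptions.
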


\section{Asymptotic unbiasedness of LR estimators}\label{sec:unbiased-estimator}
In this section we show that all four LR estimators are asymptotically unbiased in the sense that
they converge to the correct sensitivity in expectation. 
The technique of the proof is similar to that in the DTMC setting \cite{glynn2017likelihood}. However, the integration by parts formula for semimartingales and the BDG inequality allow us to carry out the analysis more conveniently.

\begin{theorem}\label{thm:unbiased-estimator}
The LR estimator is asymptotically unbiased, i.e.,  
\[\mathbb{E}\left\{\frac{1}{t}\int_0^t f(X(s))ds\, Z(t) \right\} \to \left.\frac{\partial}{\partial c}\right|_{c=c^*} \pi_c(f)\]
as $t \to \infty$\PERIOD
\end{theorem}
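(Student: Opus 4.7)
The plan is to use the solution $\hat{f}$ of the Poisson equation (guaranteed by Theorem~\ref{thm:Poisson-solution}) to convert the time average $\frac{1}{t}\int_0^t f(X(s))\,ds$ into a telescoping form, and then compute the expectation against $Z(t)$ using the integration by parts formula for semimartingales. Recall that $\mathcal{L}\hat f = \pi(f) - f$, so the $\mathcal{F}_t$-martingale $M(t)$ from \eqref{eqn:M} gives the identity
\begin{equation*}
    \int_0^t f(X(s))\,ds = t\,\pi(f) + M(t) - \hat f(X(t)) + \hat f(X(0))\PERIOD
\end{equation*}
Multiplying by $Z(t)$ and taking expectation, the $t\,\pi(f)$ and $\hat f(X(0))$ terms drop out because $Z(t)$ is a zero-mean $\mathcal{F}_t$-martingale. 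What remains is
\begin{equation*}
\mathbb{E}\Bigl\{\tfrac{1}{t}\!\!\int_0^t\! f(X(s))\,ds\, Z(t)\Bigr\}
= \tfrac{1}{t}\mathbb{E}\{M(t)Z(t)\} - \tfrac{1}{t}\mathbb{E}\{\hat f(X(t))Z(t)\}\PERIOD
\end{equation*}

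Next, I would dispatch the $\hat f(X(t))Z(t)$ term by Cauchy--Schwarz: Theorem~\ref{thm:Poisson-solution} applied with $p=4$ (so $|\hat f| \le K V^{1/4}$, compatible with Assumption~\ref{assume:observable-bounded}) together with Theorem~\ref{thm:V-ergodic} yields $\sup_t \mathbb{E}\{\hat f(X(t))^2\} \le \sup_t K^2 \mathbb{E}\{V(X(t))^{1/2}\} < \infty$, while the BDG inequality applied to the jump martingale $Z$ gives $\mathbb{E}\{Z(t)^2\} = \frac{1}{c_1^2}\mathbb{E}\int_0^t a_1(X(s))\,ds = O(t)$ by Assumption~\ref{assume:intensity-bounded} and \eqref{eqn:intensity-ergodic}. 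Hence the second term is $O(t^{-1/2})$ and vanishes.

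For the main term $\frac{1}{t}\mathbb{E}\{M(t)Z(t)\}$, I would invoke the product rule for semimartingales:
\begin{equation*}
M(t)Z(t) = \int_0^t M(s-)\,dZ(s) + \int_0^t Z(s-)\,dM(s) + [M,Z]_t\PERIOD
\end{equation*}
Both stochastic integrals are local martingales whose expectations vanish after a standard localization argument (justified by the second-moment bounds above). The jumps of $Z$ occur only at firings of reaction $1$ and have size $1/c_1$, while at such a firing $M$ jumps by $\Delta_1 \hat f(X(s-))$; hence
\begin{equation*}
[M,Z]_t = \tfrac{1}{c_1}\!\int_0^t\! \Delta_1 \hat f(X(s-))\,dR_1(s)\PERIOD
\end{equation*}
Replacing $dR_1$ by its compensator $a_1(X(s))\,ds$ via \eqref{eqn:Y} (and using Assumptions~\ref{assume:intensity-bounded},~\ref{assume:increment-bounded} to ensure integrability) gives
\begin{equation*}
\mathbb{E}\{M(t)Z(t)\} = \tfrac{1}{c_1}\mathbb{E}\!\int_0^t\! a_1(X(s))\,\Delta_1 \hat f(X(s))\,ds\PERIOD
\end{equation*}
Dividing by $t$ and invoking $V$-ergodicity \eqref{eqn:V-ergodic} (valid because $a_1 \cdot \Delta_1 \hat f$ is $V$-bounded by Assumptions~\ref{assume:intensity-bounded},~\ref{assume:increment-bounded} and Theorem~\ref{thm:Poisson-solution}) produces the limit $\frac{1}{c_1}\pi(a_1 \Delta_1 \hat f)$.

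Finally, I would identify this limit with $\partial_c\pi_c(f)|_{c^*}$. Using the nominal Poisson equation together with $\pi_c(\mathcal{L}_c \hat f_{c}) = 0$ and stationarity of $\pi$ yields
\begin{equation*}
\pi_c(f) - \pi(f) = \pi_c\bigl((\mathcal{L}_c - \mathcal{L}_{c^*})\hat f\bigr),
\end{equation*}
so, dividing by the scalar perturbation in $c_1$ and letting it tend to zero (legitimized by Assumption~\ref{assume:generator} and the continuity of $c \mapsto \pi_c$ in $V$-norm that follows from Theorem~\ref{thm:V-ergodic}), we obtain $\partial_c \pi_c(f)|_{c^*} = \pi\bigl(\partial_c \mathcal{L}_c|_{c^*}\hat f\bigr) = \tfrac{1}{c_1}\pi(a_1 \Delta_1 \hat f)$, matching the computed limit. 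The main obstacle is technical rather than conceptual: carefully justifying the vanishing of the stochastic integrals (localization and truncation) and the interchange of limit and expectation in the ergodic step, both of which hinge on the $V^{1/p}$-bounds of Theorem~\ref{thm:Poisson-solution} combined with the BDG inequality; the use of $V^{1/4}$ in Assumption~\ref{assume:increment-bounded} is precisely what makes these moment bounds close under Cauchy--Schwarz.
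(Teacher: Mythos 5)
Your proposal is correct and follows essentially the same route as the paper: Poisson equation plus the Dynkin martingale $M$, Cauchy--Schwarz with the BDG/isometry bound to kill the $\hat f(X(t))Z(t)$ boundary term, the quadratic covariation $[M,Z]$ and its compensator to produce $c_1^{-1}\pi(a_1\Delta_1\hat f)$, and the perturbation identity $\pi_c(f)-\pi(f)=\pi_c\bigl((\mathcal{L}_c-\mathcal{L}_{c^*})\hat f\bigr)$ to match it with the derivative. Two harmless slips: Assumption~\ref{assume:observable-bounded} only supports $p=2$ in Theorem~\ref{thm:Poisson-solution} (so $|\hat f|\le KV^{1/2}$, which still gives $\sup_t\mathbb{E}\{\hat f(X(t))^2\}<\infty$), and the $V$-norm continuity of $c\mapsto\pi_c$ is a perturbation result resting on Assumption~\ref{assume:generator} (the paper cites Liu's Theorem~3.3), not a consequence of Theorem~\ref{thm:V-ergodic} alone.
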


\begin{proof}
Since $Z(t)$ is a zero mean martingale we have
\[ 
\mathbb{E}\left\{\frac{1}{t}\int_0^t f(X(s)) \, ds\, Z(t) \right\} = \mathbb{E}\left\{\frac{1}{t}\int_0^t (f(X(s)) - \pi(f))\, ds\, Z(t) \right\}\PERIOD
\]
Using the Poisson equation \eqref{eqn:Possion} the right-hand side of the above equation can be written as
\begin{equation*}
\mathbb{E}\left\{\frac{1}{t}\int_0^t (f(X(s)) - \pi(f))\, ds\, Z(t) \right\}= -\mathbb{E}\left\{\frac{1}{t}\int_0^t \mathcal{L}_{c^*} \hat{f}(X(s))\, ds \, Z(t)\right\}\PERIOD
\end{equation*}
We observe that
\begin{equation}\label{eqn:martingale-product}
\begin{split}
    -\mathbb{E}\left\{ \frac{1}{t}\int_0^t \mathcal{L}_{c^*} \hat{f}(X(s))\, ds \, Z(t)\right\} =
    & \mathbb{E}\left\{\frac{1}{t}\left(\hat{f}(X(t)) - \hat{f}(X(0)) - \int_0^t \mathcal{L}_{c^*} \hat{f}(X(s))\, ds\right)\, Z(t)\right\}\\
    & - \mathbb{E}\left\{\frac{1}{t}\hat{f}(X(t)) Z(t)\right\}\PERIOD
\end{split}
\end{equation}
By the Cauchy-Schwarz inequality we obtain 
\begin{equation}\label{eqn:tail-term}
\mathbb{E}\left\{\frac{1}{t}\hat{f}(X(t))Z(t)\right\} \leq \mathbb{E}\left\{ \frac{1}{t} \hat{f}(X(t))^2\right\}^{1/2} \mathbb{E}\left\{ \frac{1}{t} Z(t)^2\right\}^{1/2}\PERIOD
\end{equation}
We claim that the right-hand side of \eqref{eqn:tail-term} vanishes as $t\to \infty$. 
In fact, by the BDG inequality (see Appendix~\ref{app_BDG}), there exists a constant $K$ such that 
\[
\frac{1}{t}\mathbb{E}\left\{Z(t)^2\right\} \leq \frac{K}{c_1^2t}\mathbb{E}R_1(t) = \frac{K}{c_1^2}\mathbb{E} \left\{\frac{1}{t}\int_0^t a_1(X(s))\, ds\right\}\PERIOD 
\]
Recalling Assumption~\ref{assume:drift-condition} and its consequence \eqref{eqn:intensity-ergodic}, we immediately have 
\[
\frac{K}{c_1^2}\mathbb{E} \left\{\frac{1}{t}\int_0^t a_1(X(s))\, ds\right\} \to \frac{K}{c_1^2} \pi (a_1) < \infty
\]
as $t \to \infty$. 
For the other term of \eqref{eqn:tail-term}, by Assumption~\ref{assume:generator} and Theorem~\ref{thm:Poisson-solution}, 
the solution to the Poisson equation, denoted by $\hat{f}$, satisfies $|\hat{f}^2|_V < \infty$ and hence
\[
\mathbb{E}\left\{\hat{f}(X(t))^2\right\} \to \pi (|\hat{f}|^2) < \infty\PERIOD 
\]
Taking $t \to \infty$ leads to 
\[
\mathbb{E}\left\{ \frac{1}{t} \hat{f}(X(t))^2\right\} \to 0\PERIOD 
\]
Therefore, it is sufficient to show that 
\[
\lim_{t\to\infty}  \mathbb{E}\left\{\frac{1}{t}\left(\hat{f}(X(t)) - \hat{f}(X(0)) - \int_0^t \mathcal{L}_{c^*} \hat{f}(X(s))\, ds\right)\, Z(t)\right\} =  \left.\frac{\partial}{\partial c}\right|_{c=c^*} \pi_c(f)\PERIOD
\]
We recall the It\^{o} formula for jump processes 
\cite{rogers1994diffusions}
\[
\hat{f}(X(t)) = \hat{f}(X(0)) + \sum_{j=1}^m \int_0^{t} \hat{f}(X(s-) + \nu_j) - \hat{f}(X(s-)) \, dR_j(s)
\]
and  also that $\mathcal{L}_{c^*} \hat{f}(x) = \sum_{j=1}^m a_j(x) (\hat{f}(x+\nu_j) - \hat{f}(x))$.
Thus we have 
\[
\hat{f}(X(t)) - \hat{f}(X(0)) - \int_0^t \mathcal{L}_{c^*} \hat{f}(X(s))\, ds = \sum_{j = 1}^m \int_0^t (\hat{f}(X(s-) + \nu_j) - \hat{f}(X(s-))) \, dY_j(s)\COMMA 
\]
where $Y_j (t) = R_j(t) - \int_0^t a_j(X(s))\, ds$.
Taking expectation on both sides we have 
\begin{equation}\label{eqn:LHS}
\begin{split}
&\mathbb{E}\left\{\frac{1}{t}\left(\hat{f}(X(t)) - \hat{f}(X(0)) - \int_0^t \mathcal{L}_{c^*} \hat{f}(X(s))\, ds\right)\, Z(t)\right\} \\
=& \frac{1}{t}\sum_{j = 1}^m\mathbb{E}\left\{\left[ \int_0^{\cdot} \Delta_j \hat{f}(X(s-)) \, dY_j(s), Z\right](t)\right\}\\
=& \frac{1}{tc_1} \mathbb{E}\left\{ \int_0^t \Delta_1 \hat{f}(X(s-))\, dR_1(s) \right\}\\
=& \frac{1}{tc_1} \mathbb{E}\left\{ \int_0^t \Delta_1 \hat{f}(X(s)) a_1(X(s))\, ds\right\}\PERIOD
\end{split}
\end{equation}
Note that $|a_1|_{\sqrt{V}} < \infty$ and $|\Delta_1 \hat{f}|_{V^{1/4}} < \infty$ (by Assumption~\ref{assume:intensity-bounded} and Theorem~\ref{thm:Poisson-solution}). Thus
we have $|a_1(x) \Delta_1 \hat{f}(x)| \leq K V(x)^{3/4} \leq K V(x)$ for some $K>0$. 
Hence, the last term of \eqref{eqn:LHS} converges to the ergodic limit $c_1^{-1}\pi_{c^*}(a_1\Delta_1 \hat{f})$ as $t\to\infty$.

On the other hand, again by the Poisson equation, 
\[
\left.\frac{\partial}{\partial c}\right|_{c=c^*} \pi_{c}(f)  =  \left.\frac{\partial}{\partial c}\right|_{c=c^*} \sum_{x \in E} f(x) \pi_c(x) = -\left.\frac{\partial}{\partial c}\right|_{c=c^*} \sum_{x\in E} \mathcal{L}_{c^*}\hat{f}(x) \pi_c(x)\PERIOD
\]
By definition
\begin{equation*}
 \begin{split}
     -\left.\frac{\partial}{\partial c}\right|_{c=c^*} \sum_{x\in E} \mathcal{L}_{c^*}\hat{f}(x) \pi_c(x) 
     &= -\lim_{h \to 0} \frac{1}{h}\sum_{x\in E} \mathcal{L}_{c^*}\hat{f}(x) (\pi_{c^*+h}(x) - \pi_{c^*}(x)) \\
     &= -\lim_{h \to 0}  \frac{1}{h}\sum_{x\in E} \mathcal{L}_{c^*}\hat{f}(x)\pi_{c^*+h}(x)\PERIOD
 \end{split}
\end{equation*}
We further write 
\[
-\lim_{h \to 0}  \frac{1}{h}\sum_{x\in E} \mathcal{L}_{c^*}\hat{f}(x)\pi_{c^*+h}(x) = \lim_{h \to 0}  \frac{1}{h}\sum_{x\in E} \left(-\mathcal{L}_{c^*+h}\hat{f}(x) + \mathcal{L}_{c^*+h}\hat{f}(x) - \mathcal{L}_{c^*}\hat{f}(x) \right)\pi_{c^*+h}(x)\PERIOD 
\]
Note that $\sum_{x\in E}\mathcal{L}_{c^*+h} \hat{f}(x) \pi_{c^*+h}(x) = \pi_{c^*+h}(\mathcal{L}_{c^*+h}\hat{f}) = 0$, hence
\begin{equation*}
\left.\frac{\partial}{\partial c}\right|_{c=c^*} \pi_{c}(f) = \lim_{h \to 0}  \frac{1}{h}\sum_{x\in E} \left(\mathcal{L}_{c^*+h}\hat{f}(x) - \mathcal{L}_{c^*}\hat{f}(x) \right)\pi_{c^*+h}(x)\PERIOD 
\end{equation*}
By the definition of $\mathcal{L}_{c^*}$ and $\mathcal{L}_{c^*+h}$ 
\begin{equation}\label{eqn:RHS}
\left.\frac{\partial}{\partial c}\right|_{c=c^*} \pi_c(f) = \lim_{h \to 0} \sum_{x\in E} \frac{a_1(x, c^*+h) - a_1(x, c^*)}{h} \Delta_1 \hat{f}(x) \pi_{c^*+h}(x)\PERIOD
\end{equation}

Now we compare \eqref{eqn:LHS} and \eqref{eqn:RHS},
\begin{equation}\label{eqn:Difference}
\begin{split}
&\left|  \frac{1}{c_1}\pi_{c^*}\left(a_1\Delta_1 \hat{f}\right) -  \left.\frac{\partial}{\partial c}\right|_{c=c^*} \pi_{c^*}(f)  \right|\\
=& \lim_{h \to 0}\left| \sum_{x\in E} \left(\frac{1}{c_1}a_1(x, c^*) - \frac{1}{h}(a_1(x, c^*+h) - a_1(x, c^*))\right) 
\Delta_1 \hat{f}(x) \pi_{c^*+h}(x) \right|\\
&+\lim_{h \to 0}\left|\sum_{x\in E} \frac{1}{c_1}a_1(x, c^*)\Delta_1 \hat{f}(x) (\pi_{c^*}(x)-\pi_{c^*+h}(x)) \right|\\
=& \lim_{h \to 0}\left|\sum_{x\in E} \frac{1}{c_1}a_1(x, c^*)\Delta_1 \hat{f}(x) (\pi_{c^*}(x)-\pi_{c^*+h}(x)) \right|,
\end{split}
\end{equation}
where the second equality holds since
\[
\frac{1}{h}(a_1(x, c^*+h) - a_1(x, c^*)) = \frac{1}{c_1}a_1(x, c^*)\COMMA 
\]
due to the simplifying choice of $a_j(x, c) = c_j b_j(x)$.
It remains to show that the right-hand side of \eqref{eqn:Difference} is zero.
By Assumption~\ref{assume:generator} and Theorem $3.3$ in \cite{liu2015perturbation} it follows that
\[ 
\lim_{h \to 0}\|\pi_{c^*}-\pi_{c^*+h} \|_V = 0\PERIOD 
\]
Finally, we note  that from $|a_1(x, c^*) \Delta_1 \hat{f}(x)| \leq K V(x)$ and thus the last term of \eqref{eqn:Difference} is zero, which is the desired result.
\end{proof}

\begin{remark}
{\rm 
According to the argument below \eqref{eqn:LHS} the quantity $c_1^{-1}\pi_{c^*}(a_1\Delta_1\hat{f})$ provides
a direct formula to compute the sensitivity we want. However, this formula involves the solution to
the Poisson equation $\hat{f}$ which is not analytically available. 
Nevertheless, if one has a numerical solver for the Poisson equation the sensitivity can be 
computed using the ergodic average of the observable $c_1^{-1}a_1\Delta_1\hat{f}$.
}
\end{remark}

\begin{remark}\label{remark:no-ergodic-sensitivity}
{\rm 
We comment that the convergence proved in this theorem is in expectation, 
meaning that the ensemble average has to be used to estimate the sensitivity
$\left.\frac{\partial}{\partial c}\right|_{c=c^*} \pi_c(f)$. 
However, one may be tempted to simply use the ergodic type average
\[\frac{1}{t}\int_0^t f(X(s))ds\, Z(t) \to \left.\frac{\partial}{\partial c}\right|_{c=c^*} \pi_c(f)\]
since this is a typical procedure
for estimating $\pi_{c^*}(f)$ as suggested by \eqref{eqn:ergodic-average}. 
Unfortunately, the results in the next section show that this is not possible
for the LR approach where the ergodic type limit does not hold for sensitivity estimation. 
}
\end{remark}
\section{Limiting distributions of the LR estimators}\label{sec:weak-limit}
In this section we compare the four LR estimators from the weak convergence perspective. 
The techniques of the proofs are standard and hence similar to those for the DTMC setting in \cite{glynn2017likelihood}, i.e., 
an application of the martingale functional central limit theorem \cite{whitt2007proofs, ethier2009markov}.


We define the matrix of the covariance function 
$$
C(t) = \left[ \begin{array}{cc}
        \sigma_{11}^2(t)& \sigma_{12}^2(t) \\
        \sigma_{21}^2(t)& \sigma_{22}^2(t)
        \end{array}
        \right]  \COMMA
$$
where  
\begin{equation}
\begin{split}
\sigma_{11}^2(t) &= t \sum_{j =1}^m \pi(a_j |\Delta_j \hat{f}|^2 ) = t \sum_{j=1}^m \sum_{x\in E} a_j(x) |\Delta_j \hat{f}|^2 \, \pi(x)\COMMA \\
\sigma_{12}^2(t) &= \sigma_{21}^2(t) = t \pi(b_1 \Delta_1 \hat{f} ) =  t \sum_{x\in E} b_1(x) \Delta_1 \hat{f}(x) \, \pi(x)\COMMA \\
\sigma_{22}^2(t) &= t \pi(b_1) =  t \sum_{x\in E} b_1(x)\,\pi(x)\PERIOD 
\end{split}
\end{equation}
First we show that $(M, Z)$ converges weakly to a Brownian motion with the mean zero and variance $C$. 
\begin{theorem}\label{thm:joint-weak-convergence}
Define $M^n(t) \triangleq M(n t)$ and $Z^n(t) \triangleq Z(nt)$, 
where $M$ and $Z$ are defined in \eqref{eqn:M} and \eqref{eqn:Z}, respectively.
Then
\begin{equation}
\left(\frac{1}{\sqrt{n}}M^n, \frac{1}{\sqrt{n}} Z^n \right) \Rightarrow W
\end{equation}
on $D_{\mathbb{R}^2}[0, 1]$ as $n \to \infty$, where
 $W = (W_1, W_2)^T$ is a two-dimensional Brownian motion with the mean zero and the covariance function $C(t)$.
\end{theorem}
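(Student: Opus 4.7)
The plan is to apply the two-dimensional martingale functional central limit theorem (e.g., Theorem~7.1.4 of \cite{ethier2009markov}) to $(M^n/\sqrt{n}, Z^n/\sqrt{n})$. Using the It\^o formula for pure-jump processes one has the representations $M(t) = \sum_{j=1}^m \int_0^t \Delta_j\hat{f}(X(s-))\,dY_j(s)$ and $Z(t) = Y_1(t)/c_1$; since the compensated counting processes $Y_j$ have orthogonal jumps with $\langle Y_j\rangle(t)=\int_0^t a_j(X(s))\,ds$, one computes
\begin{align*}
\langle M\rangle(t) &= \sum_{j=1}^m \int_0^t |\Delta_j\hat{f}(X(s))|^2 a_j(X(s))\,ds,\\
\langle Z\rangle(t) &= \frac{1}{c_1}\int_0^t b_1(X(s))\,ds,\\
\langle M, Z\rangle(t) &= \int_0^t \Delta_1\hat{f}(X(s))\, b_1(X(s))\,ds.
\end{align*}

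The first step is to identify the rescaled predictable covariations with the limiting matrix $C(t)$. By Theorem~\ref{thm:Poisson-solution} with $p=2$ the Poisson solution $\hat{f}$ is $V^{1/2}$-bounded; combined with the regularity condition~\eqref{eqn:generator-regularity} and Assumption~\ref{assume:intensity-bounded}, all three integrands above are dominated by a multiple of $V$. Since $\pi(V)<\infty$, the $V$-norm ergodic theorem~\eqref{eqn:V-ergodic} gives, in probability as $n\to\infty$,
$$\frac{1}{n}\langle M^n\rangle(t)\to\sigma_{11}^2(t),\qquad \frac{1}{n}\langle Z^n\rangle(t)\to\sigma_{22}^2(t),\qquad \frac{1}{n}\langle M^n, Z^n\rangle(t)\to\sigma_{12}^2(t).$$

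The second step, and the main obstacle, is the negligible-jump condition $\sup_{s\le 1}|\Delta(M^n(s)/\sqrt{n})|\to 0$ in probability (the analogue for $Z^n$ is immediate, as its jumps are of the deterministic size $1/(c_1\sqrt{n})$). The jumps of $M^n/\sqrt{n}$ have magnitude $|\Delta_j\hat{f}(X(ns-))|/\sqrt{n}$, and Theorem~\ref{thm:Poisson-solution} by itself yields only the crude bound $|\Delta_j\hat{f}|\le K V^{1/2}$, which after rescaling by $\sqrt{n}$ does not vanish. This is where Assumption~\ref{assume:increment-bounded} enters: the boundedness of $\Delta_j f$ should allow one to upgrade the estimate to $|\Delta_j\hat{f}|\le K' V^{1/4}$, presumably through a synchronous coupling of the trajectories starting from $x$ and $x+\nu_j$ inside the Poisson representation $\hat{f}(x) = \int_0^\infty \mathbb{E}^x[f(X(t))-\pi(f)]\,dt$. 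Granting this sharper bound, a union bound over the $O(n)$ jump times in $[0, nt]$ combined with Markov's inequality applied to the fourth $\pi$-moment $\pi((V^{1/4})^4)=\pi(V)<\infty$ yields $\sup_{s\le nt} V^{1/4}(X(s-)) = O_{\mathbb{P}}(n^{1/4})$, so that the maximum rescaled jump is $O_{\mathbb{P}}(n^{-1/4})\to 0$.

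Combining the two steps, the martingale FCLT delivers joint weak convergence of $(M^n/\sqrt{n}, Z^n/\sqrt{n})$ on $D_{\mathbb{R}^2}[0,1]$ to a continuous Gaussian martingale with covariance $C(t)$, which is precisely the two-dimensional Brownian motion $W$ of the statement. The upgrade from a $V^{1/2}$- to a $V^{1/4}$-bound on $\Delta_j\hat{f}$ is the technical crux and explains the somewhat unusual choice of exponent $1/4$ in Assumption~\ref{assume:increment-bounded}.
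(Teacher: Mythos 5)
Your strategy coincides with the paper's: both routes go through the martingale functional CLT, both compute the same predictable quadratic covariations $\langle M\rangle$, $\langle Z\rangle$, $\langle M,Z\rangle$ and identify their rescaled ergodic limits with $C(t)$, and both locate the real difficulty in the negligible-jump condition, with Assumption~\ref{assume:increment-bounded} supplying the bound $|\Delta_j\hat f|\le K V^{1/4}$. One ordering remark: that $V^{1/4}$ bound is already needed in your \emph{first} step, since with only $|\hat f|\le KV^{1/2}$ the integrand $|\Delta_j\hat f|^2 a_j$ is merely $V^{3/2}$-bounded and the ergodic theorem with $\pi(V)<\infty$ does not apply; you should not defer the upgrade to step two.

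The genuine gap is in your treatment of the maximal jump. You claim $\sup_{s\le nt}V^{1/4}(X(s-))=O_{\bP}(n^{1/4})$ from a union bound over the $O(n)$ jump times together with Markov's inequality and $\pi(V)<\infty$. But the pre-jump state at a jump time is not sampled from the law of $X(s)$ at a fixed time: converting the count of jumps at which $V^{1/4}(X(\tau-))>a$ into an integral against the compensator yields $\sum_j\bE\int_0^{nt}\mathbbm{1}_{\{V^{1/4}(X(s))>a\}}\,a_j(X(s))\,ds$, so the relevant moment is the intensity-weighted one; under Assumption~\ref{assume:intensity-bounded} the integrand $V\cdot a_j$ is only $V^{3/2}$-bounded, and $\pi(V^{3/2})<\infty$ is not among the hypotheses. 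Hence the claimed $O_{\bP}(n^{1/4})$ does not follow as stated. The paper circumvents this by bounding $\sup_{t}|M^n(t)-M^n(t-)|^2$ by $\sqrt n+\sum_j\int_0^n|\Delta_j\hat f(X(s-))|^2\mathbbm{1}_{\{|\Delta_j\hat f(X(s-))|^2>\sqrt n\}}\,dR_j(s)$, compensating $dR_j$ into $a_j\,ds$, and using that $a_j|\Delta_j\hat f|^2\le CV$ is $\pi$-integrable, so that $\pi\bigl(a_j|\Delta_j\hat f|^2\mathbbm{1}_{\{|\Delta_j\hat f|^2>\sqrt N\}}\bigr)\to 0$ as $N\to\infty$ by monotone convergence; this gives exactly the $o(n)$ control of the expected maximal squared jump required by Whitt's theorem without any integrability beyond what the covariation step already uses. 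Replace your union bound with this truncation-plus-compensator device (or add $\pi(V^{3/2})<\infty$ as an extra hypothesis, which the paper does not need).
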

\begin{proof}
We verify the three conditions of Theorem $2.1$(\romannumeral 2) in \cite{whitt2007proofs}.

{\it Maximum squared jumps of the process.}
The average jump size of the component $Z$ is negligible since  
\[
\lim_{n\to\infty}\frac{1}{n}\mathbb{E}\left\{\sup_{t \in [0, 1]}|Z^n(t) - Z^n(t-)|^2 \right\}=\lim_{n\to\infty} \frac{1}{nc_1^2} = 0\PERIOD 
\]
To verify that the average jump size of the component $M^n$ is also negligible, we write
\begin{equation}\label{eqn:max-jump-eatimate-0}
\begin{split}
&\lim_{n\to\infty}\frac{1}{n}\mathbb{E}\left\{\sup_{t \in [0, 1]}|M^n(t) - M^n(t-)|^2 \right\} \\
= &\lim_{n\to\infty}\frac{1}{n}\mathbb{E}\left\{\sup_{t \in [0, 1]}|\hat{f}(X(nt)) - \hat{f}(X(nt-))|^2 \right\}\\
\leq & \lim_{n\to\infty} \frac{1}{n}\mathbb{E}\left\{ \sup_{t \in [0, n]} |\hat{f}(X(t)) - \hat{f}(X(t-))|^2\right\}\\
\leq & \lim_{n\to\infty} \frac{1}{n}\mathbb{E}\left\{\sqrt{n} + \sup_{t\in[0,n]} |\Delta \hat{f}(X(t))|^2 \mathbbm{1}_{\{|\Delta \hat{f}(X(t))|^2 > \sqrt{n}\}}    \right\}\\
= & \lim_{n\to\infty} \frac{1}{n}\mathbb{E}\left\{\sup_{t\in[0,n]} |\Delta \hat{f}(X(t))|^2 \mathbbm{1}_{\{|\Delta \hat{f}(X(t))|^2 > \sqrt{n}\}}    \right\}\PERIOD
\end{split}
\end{equation}
Note that $\hat{f}(X(t))$ is a piecewise constant function and the jump size is 
\[
\Delta \hat{f}(X(t-)) = \Delta_j \hat{f}(X(t-))=\hat{f}(X(t-) + \nu_j) - \hat{f}(X(t-))
\]
if $R_j$ fires at $t$. 
Hence, for any $t \in [0, n]$ we have the estimate 
\begin{equation}\label{eqn:max-jump-estimate-1}
|\Delta \hat{f}(X(t))|^2 \mathbbm{1}_{\{|\Delta \hat{f}(X(t))|^2 > \sqrt{n}\}} \leq \sum_{j=1}^m|\Delta_j \hat{f}(X(t-))|^2 \mathbbm{1}_{\{|\Delta_j \hat{f}(X(t-))|^2 > \sqrt{n}\}} \PERIOD 
\end{equation}
Furthermore, we have that for each $j = 1, \ldots, m$
\begin{equation}\label{eqn:max-jump-estimate-2}
\sup_{t \in [0,n]}|\Delta_j \hat{f}(X(t-))|^2 \mathbbm{1}_{\{|\Delta_j \hat{f}(X(t-))|^2>\sqrt{n}\}} \leq \int_0^n |\Delta_j \hat{f}(X(s-))|^2 \mathbbm{1}_{\{|\Delta_j \hat{f}(X(s-))|^2 > \sqrt{n}\}} dR_j(s)\PERIOD 
\end{equation}
Combining \eqref{eqn:max-jump-estimate-1} and \eqref{eqn:max-jump-estimate-2} the right-hand side of \eqref{eqn:max-jump-eatimate-0} can be bounded as  
\begin{equation*}
\begin{split}
&\lim_{n\to\infty} \frac{1}{n}\mathbb{E}\left\{\sup_{t\in[0,n]} |\Delta \hat{f}(X(t))|^2 \mathbbm{1}_{\{|\Delta \hat{f}(X(t))|^2 > \sqrt{n}\}}    \right\}\\
\leq & 
\sum_{j=1}^m \lim_{n \to \infty} \frac{1}{n}\mathbb{E}\left\{   \int_0^n |\Delta_j \hat{f}(X(s-))|^2 \mathbbm{1}_{\{|\Delta_j \hat{f}(X(s-))|^2 > \sqrt{n}\}} dR_j(s)    \right\}\\
 = & 
\sum_{j=1}^m\lim_{n \to \infty} \frac{1}{n}\mathbb{E}\left\{   \int_0^n |\Delta_j \hat{f}(X(s))|^2 \mathbbm{1}_{\{|\Delta_j \hat{f}(X(s))|^2 > \sqrt{n}\}} a_j(X(s)) ds    \right\}\\
\leq &
\sum_{j=1}^m\lim_{n \to \infty} \frac{1}{n}\mathbb{E}\left\{   \int_0^n |\Delta_j \hat{f}(X(s))|^2 \mathbbm{1}_{\{|\Delta_j \hat{f}(X(s))|^2 > \sqrt{N}\}} a_j(X(s)) ds    \right\}\\
= & \sum_{j=1}^m \pi\left( |\Delta_j \hat{f}|^2 \mathbbm{1}_{\{|\Delta_j \hat{f}|^2 > \sqrt{N}\}} a_j   \right)
\end{split}
\end{equation*}
for any fixed integer $N$. We have used the fact that $||\Delta_j\hat{f}|^2 a_j|_{V} < \infty$ to conclude the ergodic limit in the last step. 
Taking $N \to \infty$ on the both sides of the above inequality we have, by the monotone convergence theorem,
\[
\lim_{n\to\infty} \frac{1}{n}\mathbb{E}\left\{\sup_{t\in[0,n]} |\Delta \hat{f}(X(t))|^2 \mathbbm{1}_{\{|\Delta \hat{f}(X(t))|^2 > \sqrt{n}\}}    \right\}
\leq \sum_{j=1}^m \lim_{N \to \infty}  \pi\left(  |\Delta_j \hat{f}|^2 \mathbbm{1}_{\{|\Delta_j \hat{f}|^2 > \sqrt{N}\}} a_j  \right) = 0\COMMA 
\]
and hence 
\[
\lim_{n\to\infty}\frac{1}{n}\mathbb{E}\left\{\sup_{t \in [0, 1]}|M^n(t) - M^n(t-)|^2 \right\} = 0\PERIOD 
\]

{\it Weak convergence of the predictable quadratic covariations.}
Since the integral part $\int_0^{nt} \mathcal{L}\hat{f}(X(s))\, ds$ is adapted with continuous 
paths of finite variation, both its quadratic variation and quadratic covariation with $\hat{f}(X(nt))$
are zero (see Chapter II, Theorem $26$, in \cite{protter2005stochastic}).  
Hence, by the polarization identity of quadratic variation
\begin{equation*}
[M^n, M^n](t) = [\hat{f}(X(n\cdot)), \hat{f}(X(n\cdot))](t)\PERIOD
\end{equation*}
Applying the It\^{o} formula for jump processes, $\hat{f}(X(nt))$ can be expanded as
\[
\hat{f}(X(nt)) = \hat{f}(X(0)) + \sum_{j=1}^m \int_0^{nt} \Delta_j \hat{f}(X(s-)) \, dR_j(s)\COMMA 
\]
and hence its quadratic variation is 
\begin{equation*}
\begin{split}
[\hat{f}(X(n\cdot)), \hat{f}(X(n\cdot))](t) &= \sum_{j = 1}^m \int_0^{nt} |\Delta_j \hat{f}(X(s-))|^2 \, d[R_j, R_j](t)\\
&= \sum_{j = 1}^m \int_0^{nt} |\Delta_j \hat{f}(X(s-))|^2 \, dR_j(t)
\PERIOD 
\end{split}
\end{equation*}
Now one can verify that the predictable quadratic variation of $M^n$ is
\begin{equation*}
\langle M^n, M^n \rangle(t) = \sum_{j = 1}^m \int_0^{nt} |\Delta_j \hat{f}(X(s))|^2  a_j(X(s)) \, ds\PERIOD
\end{equation*}
Hence, 
\begin{equation*}
\frac{1}{n}\langle M^n, M^n \rangle(t) = t\sum_{j = 1}^m \frac{1}{nt} \int_0^{nt} |\Delta_j \hat{f}(X(s))|^2 a_j(X(s)) \, ds  \to \sigma_{11}^2(t)
\end{equation*}
almost surely as $n \to \infty$.
Similarly, the predictable quadratic variation of $Z^n$ is
\[
\frac{1}{n}\langle Z^n, Z^n\rangle(t) = \frac{1}{c_1n} \int_0^{nt} a_1(X(s))\, ds  \to \sigma_{22}^2(t)\COMMA 
\]
and the predictable quadratic covariation is
\begin{equation*}
\frac{1}{n}\langle M^n, Z^n \rangle(t) = t\frac{1}{c_1nt} \int_0^{nt} \Delta_1 \hat{f}(X(s)) a_1(X(s))\, ds \to \sigma_{12}^2(t) = \sigma_{21}^2(t)
\end{equation*}
almost surely as $n \to \infty$.

{\it Maximum jumps in predictable quadratic covariations.}
Since all three predictable quadratic (co)variations are continuous their jumps are always zero.
Hence,
the expected value of the maximum jump in the predictable quadratic covariation is negligible.

Therefore, the conditions of Theorem $2.1$ in \cite{whitt2007proofs} are satisfied and hence the 
conclusion holds. 
\end{proof}

Now we are in position to derive the main result of this section and the limiting distributions of the four LR estimators. 
\begin{theorem}\label{thm:sensitivity-weak-convergence}
We have
\[
\left(\frac{1}{\sqrt{n}}\int_0^{n(\cdot)} (f(X(s)) - \pi(f)) ds, \frac{1}{\sqrt{n}}Z^n\right) \Rightarrow W
\]
on $D_{\mathbb{R}^2}[0, 1]$ as $n \to \infty$, where $W$ is the same Brownian motion as in Theorem~\ref{thm:unbiased-estimator}.
\end{theorem}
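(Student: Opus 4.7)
The plan is to reduce the statement to Theorem~\ref{thm:joint-weak-convergence} via a Poisson-equation decomposition of the integral. Combining the Poisson equation $-\mathcal{L}_{c^*}\hat{f} = f - \pi(f)$ with the martingale $M$ of~\eqref{eqn:M}, exactly as at the start of the proof of Theorem~\ref{thm:unbiased-estimator}, yields the pathwise identity
\[
\frac{1}{\sqrt{n}}\int_0^{nt}(f(X(s)) - \pi(f))\,ds = \frac{1}{\sqrt{n}}M^n(t) + \eta^n(t), \qquad \eta^n(t) := \frac{\hat{f}(X(0)) - \hat{f}(X(nt))}{\sqrt{n}}.
\]
Since Theorem~\ref{thm:joint-weak-convergence} already delivers $(M^n/\sqrt{n},Z^n/\sqrt{n}) \Rightarrow W$ on $D_{\mathbb{R}^2}[0,1]$, it suffices to establish $\eta^n\Rightarrow 0$ on $D[0,1]$: coordinate-wise addition is then continuous at the deterministic second argument, so the continuous mapping theorem yields the desired weak limit on $D_{\mathbb{R}^2}[0,1]$.

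To see that $\eta^n\Rightarrow 0$, I would first address the finite-dimensional marginals. Theorem~\ref{thm:Poisson-solution} with $p=2$, applicable since $|f|_{\sqrt{V}}<\infty$ by Assumption~\ref{assume:observable-bounded}, gives $|\hat{f}(x)|\leq K\sqrt{V(x)}$. Combined with the $V$-geometric ergodicity in Theorem~\ref{thm:V-ergodic} and $\pi(\sqrt{V})<\infty$, this implies $\hat{f}(X(nt))\Rightarrow\hat{f}(X_\infty)$ with $X_\infty\sim\pi$, so $\eta^n(t)\to 0$ in probability for each fixed $t\in[0,1]$, while $\hat{f}(X(0))/\sqrt{n}\to 0$ deterministically.

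For tightness in $D[0,1]$, the key observation is that $\eta^n$ is a pure-jump step process whose jumps coincide with those of $-M^n/\sqrt{n}$; the proof of Theorem~\ref{thm:joint-weak-convergence} has already shown that $\sup_{t\in[0,1]}|M^n(t)-M^n(t-)|/\sqrt{n}\to 0$ in $L^2$, so the same holds for $\eta^n$. A uniform-in-$t$ bound $\sup_{t \in [0,1]}|\eta^n(t)| = O_p(1)$ would follow by applying Dynkin's formula and the BDG inequality to the martingale compensator of $V(X(\cdot))$, producing $\sup_{s\in[0,n]} V(X(s)) = O_p(n)$ and hence $\sup_t |\eta^n(t)|\leq K\sqrt{\sup_s V(X(s))/n} = O_p(1)$. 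Together with the vanishing maximum jumps and the pointwise vanishing of the marginals, an Aldous-type criterion then gives $\eta^n\Rightarrow 0$ in $D[0,1]$, and hence the claim.

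The main obstacle is the tightness of $\eta^n$: the Foster--Lyapunov drift only controls excursions of $V(X(\cdot))$ on $[0,n]$ at the $O(n)$ scale, so one cannot hope for $\sup_t|\eta^n(t)|\to 0$ in probability. What makes the argument go through is that $|\hat{f}|\lesssim\sqrt{V}$ rather than $V$, so $|\eta^n(t)|=O_p(1)$ instead of $O_p(\sqrt{n})$; this boundedness-in-probability, combined with vanishing pointwise marginals and vanishing jump sizes inherited from Theorem~\ref{thm:joint-weak-convergence}, is just enough to conclude weak convergence of $\eta^n$ to the zero path in the Skorokhod topology.
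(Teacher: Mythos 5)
Your decomposition and reduction are exactly the paper's: write the centered integral as $M^n/\sqrt{n}$ plus the residual $\bigl(\hat{f}(X(0))-\hat{f}(X(nt))\bigr)/\sqrt{n}$, invoke Theorem~\ref{thm:joint-weak-convergence} for the leading pair, and dispose of the residual by Slutsky. The pointwise part of your argument (Markov/Chebyshev via $\mathbb{E}\{\hat f(X(nt))^2\}\to\pi(|\hat f|^2)<\infty$) is also the paper's first tightness condition. The problem is the last step, where you claim $\eta^n\Rightarrow 0$ on $D[0,1]$ while simultaneously asserting that ``one cannot hope for $\sup_t|\eta^n(t)|\to 0$ in probability.'' These two statements are incompatible: the zero path is continuous, and Skorokhod convergence to a continuous limit coincides with uniform convergence, so $\eta^n\Rightarrow 0$ on $D[0,1]$ is \emph{equivalent} to $\sup_{t\in[0,1]}|\eta^n(t)|\to 0$ in probability. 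A bound $\sup_t|\eta^n(t)|=O_p(1)$ that is not $o_p(1)$ cannot yield the conclusion you want; either the sup does vanish (and you must prove it) or the theorem's residual term does not go away.

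The Aldous-type shortcut does not close this gap. Compact containment ($O_p(1)$ sup bound) together with vanishing maximal jump size does not imply the Aldous oscillation condition: an interval of rescaled length $\theta$ corresponds to $O(n\theta)$ jumps of $X$, and many individually negligible jumps of $\hat f(X(\cdot))/\sqrt{n}$ can accumulate, so $\sup_{\sigma\le\tau\le\sigma+\theta}\mathbb{P}(|\eta^n(\tau)-\eta^n(\sigma)|>\epsilon)$ is not controlled by the maximal jump alone. (And had you verified Aldous tightness, combining it with the degenerate finite-dimensional limits would again force $\sup_t|\eta^n(t)|\to 0$ in probability, contradicting your own caveat.) A secondary issue: the claimed bound $\sup_{s\le n}V(X(s))=O_p(n)$ via BDG applied to the martingale part of $V(X(\cdot))$ is not justified under the standing assumptions, since the relevant quadratic variation involves $\sum_j\int_0^n|\Delta_j V(X(s-))|^2\,dR_j(s)$ and $|\Delta_jV|^2$ need not be $\pi$-integrable. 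The paper instead proves genuine $D[0,1]$-tightness of $n^{-1/2}|\hat f(X(n\cdot))|$ by verifying both conditions of Theorem~13.2 of Billingsley --- the marginal condition exactly as you do, and a modulus-of-continuity ($w'$) condition for the second --- which is the step your proposal needs to supply rather than circumvent.
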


\begin{proof}
First we note that 
\begin{equation*}
\begin{split}
\left(\frac{1}{\sqrt{n}}\int_0^{nt} (f(X(s)) - \pi(f)) ds,\frac{1}{\sqrt{n}}Z^n(t)\right) 
= &\left(\frac{1}{\sqrt{n}}M^n(t), \frac{1}{\sqrt{n}}Z^n(t)\right) \\
&- \left(\frac{1}{\sqrt{n}}(\hat{f}(X(nt)) - \hat{f}(X(0)), 0\right)\COMMA 
\end{split}
\end{equation*}
where the first term in the right-hand side $(n^{-1/2}M^n(t), n^{-1/2}Z^n(t)) \Rightarrow W$ by Theorem~\ref{thm:joint-weak-convergence}.
Hence, by Slutsky's lemma, it remains to show that the residual
\[
\left(\frac{1}{\sqrt{n}}(\hat{f}(X(nt)) - \hat{f}(X(0)), 0\right)
\]
converges in distribution to zero on $D_{\mathbb{R}^2}[0, 1]$.
Since the second component is a zero constant, it is sufficient to show 
the weak convergence for the first component.
We will show that the sequence (in terms of $n$) of processes ${n}^{-1/2}|\hat{f}(X(nt))|$ is tight so that convergence in finite dimensions implies convergence on $D_{\mathbb{R}}[0, 1]$.

To justify the tightness we verify the two classical conditions of Theorem~$13.2$ in \cite{billingsley2013convergence}.
For the first condition, by Markov's inequality, for any constant $a > 0$
and $t \in [0,1]$,
\[
\mathbb{P}\left(\frac{1}{\sqrt{n}}|\hat{f}(X(nt))| \geq a\right) \leq \frac{1}{na^2}\mathbb{E}\left\{  \hat{f}(X(nt))^2    \right\}\PERIOD
\]
Recalling that $\mathbb{E}\{\hat{f}(X(t))^2\} \to \pi (|\hat{f}|^2) < \infty$ it follows that
\[
\lim_{a\to\infty}\lim_{n\to \infty}\mathbb{P}\left(\frac{1}{\sqrt{n}}|\hat{f}(X(nt))| \geq a\right) = 0, 
\]
which is the first condition of Theorem~$13.2$ in \cite{billingsley2013convergence}.
For the second condition we 
note that $X(nt)$ is piecewise constant in $t$ and its jump size is independent of $n$, hence the change of $\frac{1}{\sqrt{n}}|\hat{f}(X(nt))|$ in terms of $t$ can be arbitrarily small for large $n$.
Therefore, the second condition holds trivially and it concludes the desired
weak convergence for $\frac{1}{\sqrt{n}}|\hat{f}(X(nt))|$ on $D_{\mathbb{R}}[0, 1]$.
\end{proof}

The following result suggests that the LR estimator does not have a weak limit, which is a straightforward consequence of the above theorem.

\begin{corollary}\label{cor:LR-weak-limit}
The rescaled LR estimator $n^{-1/2}\mathcal{S}_{\LR}(n)$ converges weakly to \\
$\pi(f) W_2(1)$,
i.e., 
\[\frac{1}{\sqrt{n}}\mathcal{S}_{\LR}(n) = n^{-3/2} \int_0^n f(X(s)) ds \,Z(n) \Rightarrow \pi(f) W_2(1)\]
as $n \to \infty$, 
where $W_2$ is the second component of the Brownian motion $W$ as in Theorem~\rm{\ref{thm:joint-weak-convergence}}.
\end{corollary}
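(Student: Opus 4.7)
The plan is to write the rescaled LR estimator as a product of two factors whose limits we already control, and then invoke Slutsky's lemma. Explicitly, I would observe that
\[
\frac{1}{\sqrt{n}}\mathcal{S}_{\LR}(n) \;=\; n^{-3/2}\int_0^n f(X(s))\,ds\cdot Z(n) \;=\; \left(\frac{1}{n}\int_0^n f(X(s))\,ds\right)\left(\frac{1}{\sqrt{n}} Z(n)\right).
\]
This decomposition is what makes the result essentially immediate from the infrastructure already in place.

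For the first factor I would invoke the $V$-ergodic theorem (the consequence of Theorem~\ref{thm:V-ergodic} stated in equation~\eqref{eqn:V-ergodic}): since $|f|_{\sqrt V}<\infty$ by Assumption~\ref{assume:observable-bounded} and $\sqrt{V}\le V$, we have $|f|_V<\infty$ and hence $\frac{1}{n}\int_0^n f(X(s))\,ds \to \pi(f)$ almost surely. For the second factor I would read off from Theorem~\ref{thm:joint-weak-convergence}, evaluated at $t=1$, that $\frac{1}{\sqrt n}Z(n) = \frac{1}{\sqrt n} Z^n(1) \Rightarrow W_2(1)$.

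Finally, I would apply Slutsky's lemma: the a.s.\ (hence in probability) convergence of the first factor to the deterministic constant $\pi(f)$, combined with the weak convergence of the second factor to $W_2(1)$, yields
\[
\frac{1}{\sqrt{n}}\mathcal{S}_{\LR}(n) \;\Rightarrow\; \pi(f)\,W_2(1),
\]
as desired. There is no real obstacle here; the content is packaged entirely in Theorem~\ref{thm:joint-weak-convergence} and the ergodic theorem. The only point that merits a brief sentence is the observation that the limit $\pi(f) W_2(1)$ is typically nonzero (whenever $\pi(f)\neq 0$), which together with the scaling by $n^{-1/2}$ explains \emph{why} the unrescaled estimator $\mathcal{S}_{\LR}(n)$ fails to have an a.s.\ limit and motivates the comparison with the centered variants in the subsequent sections.
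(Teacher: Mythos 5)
Your proof is correct and follows essentially the same route as the paper: factor the estimator as $\bigl(n^{-1}\int_0^n f(X(s))\,ds\bigr)\bigl(n^{-1/2}Z(n)\bigr)$, apply the law of large numbers to the first factor, Theorem~\ref{thm:joint-weak-convergence} at $t=1$ to the second, and conclude by Slutsky's lemma. The only small imprecision is your citation for the first factor: the a.s.\ convergence of the time average follows from positive Harris recurrence together with $\pi(|f|)<\infty$ (as the paper notes), not from \eqref{eqn:V-ergodic}, which only asserts convergence of the expectations $\mathbb{E}_x f(X(t))\to\pi(f)$.
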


\begin{proof}
To begin, we note that by Theorem~\ref{thm:V-ergodic} $X(t)$ is positive Harris recurrent and $\pi(V) < \infty$.
Now since $\pi(|f|) < \infty$ the law of large numbers holds for $f(X(t))$, i.e., 
\[
n^{-1}\int_0^n f(X(s)) ds \to \pi(f) \;\;\mbox{a.s.}
\] 
By virtue of Theorem~\ref{thm:joint-weak-convergence} and continuous mapping we have
\[
n^{-1/2} Z(n) \Rightarrow W_2(1)\PERIOD 
\]
Now by Slutsky's lemma the product $n^{-3/2}\int_0^n f(X(s)) ds\, Z(n)$ converges weakly to $\pi(f) W_2(1)$.
\end{proof}

Since $n^{-1/2} \mathcal{S}_{\LR}$ converges weakly to a well-defined limit the LR estimator $\mathcal{S}_{\LR}$ 
does not have a weak limit.
This suggests that one cannot use the LR estimator for the ergodic average as we pointed out in Remark~\ref{remark:no-ergodic-sensitivity}. 
In contrast, the CLR estimator does converge weakly to a well-defined limit. 

\begin{corollary}\label{cor:CLR-weak-limit}
The CLR estimator converges weakly to $W_1(1)W_2(1)$, i.e., 
\[
\mathcal{S}_{\CLR}(n) = \frac{1}{n}\int_0^n (f(X(s)) - \pi(f)) ds\, Z(n)\Rightarrow W_1(1) W_2(1)
\]
as $n \to \infty$, where $W = (W_1, W_2)$ is the same Brownian motion as in Theorem~\ref{thm:joint-weak-convergence}.
\end{corollary}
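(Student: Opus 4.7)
The plan is to piggyback directly on Theorem~\ref{thm:sensitivity-weak-convergence} and apply the continuous mapping theorem. Indeed, observe that $\mathcal{S}_{\CLR}(n)$ factors as
\begin{equation*}
\mathcal{S}_{\CLR}(n) = \left(\frac{1}{\sqrt{n}}\int_0^n (f(X(s))-\pi(f))\,ds\right)\cdot\left(\frac{1}{\sqrt{n}} Z(n)\right)\PERIOD
\end{equation*}
Thus the two $1/\sqrt{n}$ factors combine to give the $1/n$ prefactor of the centered likelihood ratio estimator, and no additional rescaling is needed.

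First, I would invoke Theorem~\ref{thm:sensitivity-weak-convergence} to obtain the joint weak convergence
\begin{equation*}
\left(\frac{1}{\sqrt{n}}\int_0^{n\cdot}(f(X(s))-\pi(f))\,ds,\ \frac{1}{\sqrt{n}} Z^n\right) \Rightarrow (W_1,W_2)
\end{equation*}
in $D_{\mathbb{R}^2}[0,1]$. Next I would evaluate the processes at the fixed time $t=1$. Because $W_1$ and $W_2$ are continuous at $t=1$ almost surely, the projection map $\omega\mapsto \omega(1)$ from $D_{\mathbb{R}^2}[0,1]$ to $\mathbb{R}^2$ is continuous at every continuous path, so by the continuous mapping theorem the pair of random variables at $t=1$ converges jointly in distribution to $(W_1(1),W_2(1))$.

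Finally, the multiplication map $(x,y)\mapsto xy$ from $\mathbb{R}^2$ to $\mathbb{R}$ is continuous everywhere, so a second application of the continuous mapping theorem yields
\begin{equation*}
\mathcal{S}_{\CLR}(n) \Rightarrow W_1(1)\, W_2(1)\COMMA
\end{equation*}
as desired. There is no genuine obstacle here beyond invoking the previous theorem correctly; the only subtlety worth double-checking is that projection at the fixed endpoint $t=1$ is well-defined under weak convergence in the Skorokhod topology, which holds because the limit $W$ has continuous sample paths almost surely and hence is continuous at $1$ with probability one.
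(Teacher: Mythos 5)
Your proposal is correct and matches the paper's own argument: both rest on the joint convergence from Theorem~\ref{thm:sensitivity-weak-convergence} followed by the continuous mapping theorem, the only cosmetic difference being that you project to $t=1$ before multiplying while the paper multiplies the processes and then sets $t=1$. Your extra remark on why evaluation at the endpoint is legitimate in the Skorokhod topology (a.s.\ continuity of the limit $W$) is a correct and welcome detail that the paper leaves implicit.
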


\begin{proof}
By Theorem~\ref{thm:sensitivity-weak-convergence} and continuous mapping we have
\[
\frac{1}{n}\int_0^{nt} (f(X(s)) - \pi(f)) ds\, Z^n(t)\Rightarrow W_1(\cdot)W_2(\cdot)\PERIOD 
\]
Taking $t = 1$ leads to the result.
\end{proof}

We can show a similar weak convergence result for the two integral type estimators.
The proof is essentially the same as that of the discrete case in \cite{glynn2017likelihood}, which relies on Theorem $2.7$ in \cite{kurtz1991weak}.

\begin{corollary}\label{cor:integral-LR-weak-limit}
The rescaled integral type LR estimator $n^{-1/2}\tilde{\mathcal{S}}_{\LR}(n)$ converges weakly to the random variable
$\pi(f) \int_0^1 (1-s) dW_2(s)$,
i.e., 
\[
\frac{1}{\sqrt{n}}\tilde{\mathcal{S}}_{\LR}(n) = n^{-3/2} \int_0^n f(X(s)) Z(s)ds  \Rightarrow \pi(f) \int_0^1 (1-s) dW_2(s)
\]
 as $n\to \infty$.
\end{corollary}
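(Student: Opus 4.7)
The plan is to reduce the integral-type estimator to a boundary term plus a stochastic integral against $Z$ via integration by parts, and then pass to the limit using the weak convergence already established in Theorem~\ref{thm:joint-weak-convergence} together with Kurtz and Protter's stability theorem for stochastic integrals.

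First, set $F(t) \triangleq \int_0^t f(X(s))\,ds$. Since $F$ is continuous and of finite variation while $Z$ is a pure-jump local martingale, the quadratic covariation $[F,Z]$ vanishes, and the semimartingale integration-by-parts formula gives
\begin{equation*}
\int_0^n f(X(s)) Z(s)\,ds = F(n) Z(n) - \int_0^n F(s-)\,dZ(s).
\end{equation*}
Dividing by $n^{3/2}$ yields
\begin{equation*}
n^{-3/2}\tilde{\mathcal{S}}_{\LR}(n) = \frac{F(n)}{n}\cdot\frac{Z(n)}{\sqrt{n}} - \int_0^1 \frac{F(nu-)}{n}\,d\!\left(\frac{Z^n(u)}{\sqrt{n}}\right),
\end{equation*}
after the change of variable $s = nu$ in the stochastic integral.

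For the boundary term, the ergodic theorem gives $F(n)/n \to \pi(f)$ almost surely, while Theorem~\ref{thm:joint-weak-convergence} gives $Z(n)/\sqrt{n} \Rightarrow W_2(1)$, so Slutsky's lemma yields $n^{-3/2} F(n) Z(n) \Rightarrow \pi(f)\, W_2(1)$. For the stochastic-integral term, I would check the hypotheses of Theorem~2.7 of Kurtz--Protter \cite{kurtz1991weak}. The integrand $u \mapsto F(nu-)/n$ converges almost surely, uniformly on $[0,1]$, to the deterministic continuous limit $u\pi(f)$ by ergodicity. The integrator $Z^n/\sqrt{n}$ converges weakly to $W_2$ by Theorem~\ref{thm:joint-weak-convergence}, and its predictable quadratic variation $(n c_1^2)^{-1}\int_0^{n\cdot} a_1(X(s))\,ds$ converges in probability to the deterministic function $\cdot\,\pi(a_1)/c_1^2$, which gives the uniform tightness needed in Kurtz--Protter. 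Thus the stability theorem applies jointly and
\begin{equation*}
\int_0^1 \frac{F(nu-)}{n}\,d\!\left(\frac{Z^n(u)}{\sqrt{n}}\right) \Rightarrow \pi(f)\int_0^1 u\,dW_2(u).
\end{equation*}

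Combining the two pieces and using the trivial identity $W_2(1) = \int_0^1 dW_2(u)$ delivers
\begin{equation*}
n^{-3/2}\tilde{\mathcal{S}}_{\LR}(n) \Rightarrow \pi(f) W_2(1) - \pi(f)\int_0^1 u\,dW_2(u) = \pi(f)\int_0^1 (1-u)\,dW_2(u),
\end{equation*}
which is the stated limit. The main obstacle is Step~3: verifying the uniform tightness condition of Kurtz--Protter for the rescaled pure-jump martingale $Z^n/\sqrt{n}$ and confirming that the pair (integrand, integrator) converges jointly so that the stochastic-integral map is continuous at the limit. The predictable-bracket estimates already established in the proof of Theorem~\ref{thm:joint-weak-convergence}, combined with the almost-sure uniform ergodic convergence of the smooth integrand $F(n\cdot)/n$, should make this verification routine but it is the only non-trivial ingredient.
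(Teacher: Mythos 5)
Your proof is correct and follows essentially the same route as the paper: both reduce $\tilde{\mathcal{S}}_{\LR}(n)$ to a stochastic integral against $n^{-1/2}Z^n$ and invoke Theorem $2.7$ of Kurtz--Protter, and your integration-by-parts decomposition $F(n)Z(n)-\int_0^n F(s-)\,dZ(s)$ is algebraically identical to the paper's Fubini rewriting $\int_0^n\int_u^n f(X(s))\,ds\,dZ(u)$, with the boundary term merely folded into the integrand (giving the limit $(1-u)\pi(f)$ directly) rather than kept separate. The one point to phrase more carefully is the UT verification: the hypothesis of Theorem $2.7$ is a bound on the \emph{expected} quadratic variation at suitable stopping times, which follows from $\mathbb{E}\{a_1(X(s))\}\to\pi(a_1)$ exactly as the paper checks, rather than from convergence in probability of the predictable bracket alone.
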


\begin{proof}
For the ease of analysis we write the integral type LR estimator in a slightly different form.
Note that
\[
\int_0^n f(X(s)) Z(s) ds = \int_0^n \int_0^s f(X(s)) dZ(u)\, ds 
=\int_0^n \int_u^n f(X(s)) ds\, dZ(u)\COMMA 
\]
where we have changed the order of integration at the last equality.
Substituting $u = nv$ yields
\[
\int_0^n \int_u^n f(X(s)) ds\, dZ(u) = \int_0^1 \int_{nv}^n f(X(s)) ds\, dZ^n(v)\COMMA
\]
where $Z^n(t) = Z(nt)$ as we defined in Theorem~\ref{thm:joint-weak-convergence}.
Hence
\[
\frac{1}{\sqrt{n}}\tilde{\mathcal{S}}_{\LR}(n) = n^{-3/2}\int_0^1 \int_{nv}^n f(X(s)) ds\, dZ^n(v)\PERIOD
\]
For any $t \in [0, 1]$ we define $F^n(t) \triangleq \int_{nt}^n f(X(s)) ds$ for each $n \in \mathbb{Z}_+$ and $F(t) \triangleq (1-t)\pi(f)$.
We write
\[
\frac{1}{n}F^n(t) = F(t) + \frac{1}{n}\int_{nt}^n (f(X(s)) - \pi(f)) ds\PERIOD
\]
By Theorem~\ref{thm:sensitivity-weak-convergence} and continuous mapping 
the process $n^{-1/2}\int_{nt}^n (f(X(s)) - \pi(f)) ds$
converges weakly to $W_1(1) - W_1$ on $D_{\mathbb{R}}[0, 1]$ as $n \to \infty$. 
It follows that the process $n^{-1}\int_{nt}^n (f(X(s)) - \pi(f)) ds$
converges weakly to the zero function on $D_{\mathbb{R}}[0, 1]$ as $n \to \infty$.
Hence
\[
\frac{1}{n}F^n \Rightarrow F \;\;\;\mbox{on $D_{\mathbb{R}}[0, 1]$ as $n \to \infty$.}
\]
Also by Theorem~\ref{thm:joint-weak-convergence} 
\[
\frac{1}{\sqrt{n}}Z^n \Rightarrow W_2 \;\;\;\mbox{on $D_{\mathbb{R}}[0, 1]$ as $n \to \infty$.}
\]
Since $F$ is deterministic we have the joint convergence
\[
(n^{-1}F^n, n^{-1/2}Z^n) \Rightarrow (F, W_2)
\]
on $D_{\mathbb{R}^2}[0, 1]$ as $n\to \infty$. 
It remains to verify the conditions in Theorem $2.7$ in \cite{kurtz1991weak}.
Following the same argument as in \cite{glynn2017likelihood},
for each $\alpha > 0$ we choose the stopping time $\tau_n^{\alpha} = 2 \alpha$ so that 
$\mathbb{P}(\tau_n^{\alpha} \leq \alpha) \leq \alpha^{-1}$ is trivially satisfied. 
Moreover, 
\begin{equation}
\begin{split}
\sup_n \mathbb{E}\{[n^{-1/2}Z^n, n^{-1/2}Z^n](t \wedge \tau_n^{\alpha})\} 
&= \sup_n \frac{1}{n c_1^2}\mathbb{E}\{R_1(n(t \wedge \tau_n^{\alpha}))\}\\
&\leq \sup_n \frac{1}{n c_1^2} \int_0^{2\alpha n}\mathbb{E}\{a_1(X(s))\} ds\PERIOD
\end{split}
\end{equation}
Since by \eqref{eqn:intensity-ergodic} 
$(2\alpha n)^{-1}\int_0^{2\alpha n}\mathbb{E}\{a_1(X(s))\} ds \to \pi(a_1)$
the above supremum is finite. 
Therefore, the conditions of Theorem $2.7$ in \cite{kurtz1991weak} are justified and hence
\[
\left(n^{-1}F^n, n^{-1/2} Z^n, n^{-3/2}\int_0^1 F^n(s) dZ^n(s) \right) \Rightarrow \left(F, W_2, \int_0^1 F(s)\, dW_2(s)\right)
\]
on $D_{\mathbb{R}^3}[0, 1]$ as $n \to \infty$.
The weak convergence along the third component implies the desired result. 
\end{proof}

Following a similar argument we can show the integral type CLR estimator converges in distribution. 
\begin{corollary}\label{cor:integral-CLR-weak-limit}
The integral type CLR estimator converges weakly to the random variable $\int_0^1 (W_1(1) - W_1(s)) dW_2(s)$, i.e., 
\[
\tilde{\mathcal{S}}_{\CLR}(n) \Rightarrow \int_0^1 (W_1(1) - W_1(s)) dW_2(s)
\]
 as $n\to \infty$.
\end{corollary}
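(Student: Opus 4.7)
The plan is to mirror the proof of Corollary~\ref{cor:integral-LR-weak-limit}, with the observation that the only change is that the integrand in the stochastic integral now has a random (rather than deterministic) Brownian limit, while the integrator is the same martingale $Z^n$. All the heavy lifting has already been done in Theorem~\ref{thm:sensitivity-weak-convergence} and Theorem~\ref{thm:joint-weak-convergence}.

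First, I would apply the same Fubini/change-of-variable trick used for $\tilde{\mathcal{S}}_{\LR}(n)$. Writing $Z(s)=\int_0^s dZ(u)$ and swapping the order of integration gives
\[
\int_0^n (f(X(s))-\pi(f))Z(s)\, ds = \int_0^n\!\!\int_u^n (f(X(s))-\pi(f))\,ds\, dZ(u),
\]
and the substitution $u=nv$ together with $Z^n(v)=Z(nv)$ yields
\[
\tilde{\mathcal{S}}_{\CLR}(n) = \int_0^1 \frac{G^n(v)}{\sqrt{n}}\, d\!\left(\frac{Z^n(v)}{\sqrt{n}}\right),
\qquad
G^n(v) \triangleq \int_{nv}^n (f(X(s))-\pi(f))\, ds.
\]

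Next I would establish the joint functional convergence of the integrand and the integrator. Setting $H^n(t)=\int_0^{nt}(f(X(s))-\pi(f))\,ds$, Theorem~\ref{thm:sensitivity-weak-convergence} gives $(n^{-1/2}H^n, n^{-1/2}Z^n)\Rightarrow(W_1,W_2)$ on $D_{\mathbb{R}^2}[0,1]$. Since $n^{-1/2}G^n(t) = n^{-1/2}H^n(1) - n^{-1/2}H^n(t)$ and the map $(\phi,\psi)\mapsto(\phi(1)-\phi,\psi)$ is continuous on $D_{\mathbb{R}^2}[0,1]$ at continuous limit points, the continuous mapping theorem delivers
\[
\left(\frac{G^n}{\sqrt{n}},\, \frac{Z^n}{\sqrt{n}}\right) \Rightarrow \bigl(W_1(1)-W_1(\cdot),\, W_2\bigr)
\quad \text{on } D_{\mathbb{R}^2}[0,1].
\]

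Then I would invoke Theorem~$2.7$ of~\cite{kurtz1991weak} to pass to the limit inside the stochastic integral. The two hypotheses needed are the joint convergence above, and the uniform tightness of the integrators $n^{-1/2}Z^n$, which is verified exactly as in the proof of Corollary~\ref{cor:integral-LR-weak-limit}: choosing the deterministic stopping time $\tau_n^{\alpha}=2\alpha$, the predictable quadratic variation is controlled by
\[
\sup_n\mathbb{E}\bigl\{[n^{-1/2}Z^n, n^{-1/2}Z^n](t\wedge\tau_n^{\alpha})\bigr\} \le \sup_n \frac{1}{nc_1^2}\int_0^{2\alpha n} \mathbb{E}\{a_1(X(s))\}\, ds,
\]
which is finite by \eqref{eqn:intensity-ergodic}. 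Kurtz--Protter then gives
\[
\int_0^1 \frac{G^n(s)}{\sqrt{n}}\, d\!\left(\frac{Z^n(s)}{\sqrt{n}}\right) \Rightarrow \int_0^1 (W_1(1)-W_1(s))\, dW_2(s),
\]
which is exactly $\tilde{\mathcal{S}}_{\CLR}(n)\Rightarrow \int_0^1(W_1(1)-W_1(s))\,dW_2(s)$.

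The only step requiring genuine care, as opposed to repeating the previous corollary verbatim, is the joint convergence of the integrand with the integrator: in Corollary~\ref{cor:integral-LR-weak-limit} the integrand $n^{-1}F^n$ converged to a \emph{deterministic} function, so joint convergence with $n^{-1/2}Z^n$ was automatic. Here the limit $W_1(1)-W_1(\cdot)$ is correlated with $W_2$ through the off-diagonal entry $\sigma_{12}^2$ of the covariance matrix $C(t)$, so one must extract this joint convergence directly from Theorem~\ref{thm:sensitivity-weak-convergence} rather than combine marginal convergences with Slutsky. Once that is in hand, the Kurtz--Protter machinery accommodates the stochastic integrand just as well as a deterministic one.
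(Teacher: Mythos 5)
Your proposal is correct and follows essentially the same route as the paper: the same Fubini/change-of-variable rewriting, the joint convergence $(n^{-1/2}G^n, n^{-1/2}Z^n) \Rightarrow (W_1(1)-W_1, W_2)$ obtained from Theorem~\ref{thm:sensitivity-weak-convergence} via continuous mapping, and the Kurtz--Protter theorem with the uniform-tightness check carried over from Corollary~\ref{cor:integral-LR-weak-limit}. Your closing remark about why the joint convergence cannot be assembled from marginals via Slutsky here is exactly the point the paper handles by invoking Theorem~\ref{thm:sensitivity-weak-convergence} directly.
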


\begin{proof}
Following the same argument as in the proof of the last theorem we write
\[
\tilde{\mathcal{S}}_{\CLR}(n) = \frac{1}{n}\int_0^1 \int_{nv}^n (f(X(s)) - \pi(f)) ds \, dZ^n(v)\PERIOD 
\]
For any $t \in [0, 1]$ we define $G^n(t) \triangleq \int_{nt}^n (f(X(s)) - \pi(f)) ds$ for each $n \in \mathbb{Z}_+$.
By Theorem~\ref{thm:sensitivity-weak-convergence} and continuous mapping
\[
(n^{-1/2}G^n, n^{-1/2}Z^n) \Rightarrow (W_1(1) - W_1, W_2)
\]
on $D_{\mathbb{R}^2}[0, 1]$ as $n\to \infty$.
Finally, by the same argument as in the proof of Corollary~\ref{cor:integral-LR-weak-limit} 
\[
\left(n^{-1/2}G^n, n^{-1/2}Z^n, n^{-1}\int_0^1 G^n dZ^n(s)\right) \Rightarrow \left(W_1(1) - W_1, W_2, \int_0^1 W_1(1) - W_1(s) dW_2(s)\right)
\]
on $D_{\mathbb{R}^3}[0, 1]$ as $n \to \infty$.
\end{proof}

\section{Variance analysis of LR estimators}\label{sec:var-analysis}

In this section we study
the variance of the LR estimators in terms of time.
It has been numerically observed in the literature \cite{arampatzis2016efficient} that 
the variance of the CLR estimator is independent of time. However, to the best of our knowledge, there do not exist rigorous results that justify this observation.
We provide a rigorous explanation for this behavior of the LR estimators.
Throughout this section we use $C$ to denote a generic constant in order to simplify the notation. 

\begin{theorem}\label{thm:CLR-var}
Suppose all intensities $a_j$ are uniformly bounded. Then
there exists a constant $K>0$ such that
\[
\limsup_{t \to \infty}\VAR\{\mathcal{S}_{\CLR}(t)\} \leq K\PERIOD 
\]
That is, the variance of the CLR estimator is at most of order $\mathcal{O}(1)$.
\end{theorem}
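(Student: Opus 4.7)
The plan is to use the Poisson equation to rewrite the centered time integral as a martingale plus boundary terms, and then bound $\mathbb{E}\{\mathcal{S}_{\CLR}(t)^2\}$ term by term. Combining \eqref{eqn:Possion} with the definition of $M$ in \eqref{eqn:M}, one obtains
\[
\int_0^t \bigl(f(X(s))-\pi(f)\bigr)\,ds \;=\; M(t) - \hat{f}(X(t)) + \hat{f}(X(0)),
\]
and hence
\[
\mathcal{S}_{\CLR}(t) \;=\; \tfrac{1}{t}\bigl(M(t) - \hat{f}(X(t)) + \hat{f}(X(0))\bigr)\,Z(t).
\]
Since $\VAR\{\mathcal{S}_{\CLR}(t)\}\le\mathbb{E}\{\mathcal{S}_{\CLR}(t)^2\}$, the task reduces to bounding the single cross term $t^{-2}\mathbb{E}\{M(t)^2Z(t)^2\}$ together with two boundary contributions $t^{-2}\mathbb{E}\{\hat{f}(X(t))^2Z(t)^2\}$ and $t^{-2}\mathbb{E}\{\hat{f}(X(0))^2Z(t)^2\}$.

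For the boundary terms I would invoke Cauchy-Schwarz together with BDG. The BDG inequality applied to $Z$, combined with $[Z,Z](t)=c_1^{-2}R_1(t)$ and the uniform bound on $a_1$, yields $\mathbb{E}\{Z(t)^4\}\le C\mathbb{E}\{R_1(t)^2\}\le Ct^2$. Choosing $p=4$ in Theorem~\ref{thm:Poisson-solution} gives $|\hat{f}|^4\le KV$, and Theorem~\ref{thm:V-ergodic} then makes $\mathbb{E}\{\hat{f}(X(t))^4\}$ bounded uniformly in $t$. Thus $t^{-2}\mathbb{E}\{\hat{f}(X(t))^2Z(t)^2\}=O(t^{-1})$, and likewise for the $\hat{f}(X(0))$ term.

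The main step is to show $\mathbb{E}\{M(t)^2Z(t)^2\}\le Ct^2$. I would start from the integration by parts formula for semimartingales,
\[
M(t)Z(t) \;=\; \int_0^t M(s-)\,dZ(s) + \int_0^t Z(s-)\,dM(s) + [M,Z](t),
\]
and square the decomposition. The covariation simplifies to $[M,Z](t)=c_1^{-1}\int_0^t\Delta_1\hat{f}(X(s-))\,dR_1(s)$; since Theorem~\ref{thm:Poisson-solution} (with $p=4$) together with the regularity condition \eqref{eqn:generator-regularity} yields $|\Delta_1\hat{f}|\le CV^{1/4}$, the bounded intensity assumption makes its compensator $O(t)$ and its variance $O(t)$, so $\mathbb{E}\{[M,Z](t)^2\}=O(t^2)$. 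For the two stochastic integrals, BDG and the predictable projection give
\[
\mathbb{E}\left\{\left(\int_0^t M(s-)\,dZ(s)\right)^2\right\} \;\le\; \frac{C}{c_1^{2}}\int_0^t \mathbb{E}\{M(s)^2 a_1(X(s))\}\,ds,
\]
and a similar inequality for $\int_0^t Z(s-)\,dM(s)$ with the roles of $M$ and $Z$ exchanged. Using $\mathbb{E}\{M(s)^2\}=\mathbb{E}\{\langle M,M\rangle(s)\}\le Cs$ and $\mathbb{E}\{Z(s)^2\}\le Cs$ (both consequences of bounded intensities combined with the $V^{1/2}$-bound on $(\Delta_j\hat{f})^2$), each integral is $O(t^2)$, as required.

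The main obstacle is making sure that all moment estimates genuinely close. One must pick the exponent $p$ in Theorem~\ref{thm:Poisson-solution} just large enough that after successive applications of Cauchy-Schwarz inside the BDG bounds the integrand remains $V^{1/k}$-bounded with integrable ergodic limit, and must verify that the local martingales $\int M\,dZ$ and $\int Z\,dM$ are true $L^2$-martingales so BDG applies in expectation rather than only in probability. Uniform boundedness of the intensities is essential throughout: it turns the random jump measure into a compensator of linear growth, keeps moments of $R_j(t)$ polynomial in $t$, and is what allows every step in the above chain of estimates to close at the desired rate.
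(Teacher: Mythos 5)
Your decomposition is the same as the paper's: use the Poisson equation to write $\int_0^t(f-\pi(f))\,ds = M(t)-\hat f(X(t))+\hat f(X(0))$, dispose of the boundary terms, and show $\mathbb{E}\{M(t)^2Z(t)^2\}=\mathcal{O}(t^2)$. Where you diverge is in the key estimate: the paper applies Cauchy--Schwarz once, $t^{-2}\mathbb{E}\{M^2Z^2\}\le t^{-2}\sqrt{\mathbb{E}M^4\,\mathbb{E}Z^4}$, and then bounds each fourth moment separately by BDG (on $[M,M]$ resp.\ $[Z,Z]$), splitting $dR_j = dY_j + a_j\,ds$ and using Jensen on the drift part to get $\mathbb{E}M^4\le C(t+1)\,\mathbb{E}\int_0^tV(X(s))\,ds$ and $\mathbb{E}Z^4\le C(t^2+t)$. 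You instead expand the product $M(t)Z(t)$ by the semimartingale integration-by-parts formula and bound the three pieces $\int M_-\,dZ$, $\int Z_-\,dM$, $[M,Z]$ in $L^2$ via BDG/isometry. Both routes close at $\mathcal{O}(t^2)$; yours avoids fourth moments of $M$ and $Z$ but pays for it with one extra layer of stochastic integrals, and it is arguably closer in spirit to the paper's proof of Theorem~\ref{thm:integral-CLR-var}, where exactly the bound $\mathbb{E}\{(\int_0^tM\,dZ)^2\}=\mathcal{O}(t^2)$ is established. One small repair in your version: in $\mathbb{E}\{\int_0^t Z(s)^2|\Delta_j\hat f(X(s))|^2a_j(X(s))\,ds\}$ the weight $|\Delta_j\hat f|^2a_j$ is only $V^{1/2}$-bounded, not uniformly bounded, so you cannot simply insert $\mathbb{E}\{Z(s)^2\}\le Cs$; you need one more Cauchy--Schwarz, $\mathbb{E}\{Z(s)^2V^{1/2}(X(s))\}\le(\mathbb{E}Z(s)^4)^{1/2}(\mathbb{E}V(X(s)))^{1/2}=\mathcal{O}(s)$, which still gives $\mathcal{O}(t^2)$ after integration.

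The one genuine overreach is your treatment of the boundary term. You invoke Theorem~\ref{thm:Poisson-solution} with $p=4$ to get $|\hat f|^4\le KV$, but that choice of $p$ requires the data $f$ to be $V^{1/4}$-bounded, whereas Assumption~\ref{assume:observable-bounded} only gives $|f|_{\sqrt V}<\infty$; under the stated hypotheses the theorem yields $|\hat f|\le KV^{1/2}$, hence $|\hat f|^4\le KV^2$, and $\pi(V^2)$ is not controlled. So $\mathbb{E}\{\hat f(X(t))^4\}$ is not available, and the Cauchy--Schwarz bound $t^{-2}(\mathbb{E}\hat f^4)^{1/2}(\mathbb{E}Z^4)^{1/2}$ does not close as written. (To be fair, the paper disposes of this term with a one-line appeal to ``the same argument as in Theorem~\ref{thm:unbiased-estimator},'' which faces the same integrability question; a clean fix is to bound $\mathbb{E}\{\hat f(X(t))^2Z(t)^2\}\le K\,\mathbb{E}\{V(X(t))Z(t)^2\}$ and control the latter directly, or to strengthen the assumption on $f$ to $V^{1/4}$-boundedness.) Everything else in your argument is sound.
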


\begin{remark}
{\rm 
We remark that other than the assumptions (essentially Assumption~\ref{assume:intensity-bounded}) of Section~\ref{sec:LR-sensitivity} here we additionally 
assume that all intensities $a_j$ are uniformly bounded in order to carry through the proof.
However, if we replace Assumption~\ref{assume:intensity-bounded} by a more restrictive regularity condition such as $|a_j|_{V^{1/4}} < \infty$, then we can omit the uniform boundedness assumption on $a_j$ in the above theorem and hence
our result still holds for systems with unbounded intensities. 
}
\end{remark}

\begin{proof}
Note that the expectation of $\mathcal{S}_{\CLR}(t)$ (i.e., the sensitivity) is uniformly bounded in $t$ by Theorem~\ref{thm:unbiased-estimator}.
Hence it is sufficient to show that the second moment of $\mathcal{S}_{\CLR}(t)$
is uniformly bounded in $t$.
We recall that
\[
\mathcal{S}_{\CLR}(t) = \frac{1}{t}M(t)Z(t) - \frac{1}{t}\left(\hat{f}(X(t)) - \hat{f}(X(0))\right) Z(t)\COMMA 
\]
where $M(t) = \hat{f}(X(t)) - \hat{f}(X(0)) - \int_0^{t} \mathcal{L}\hat{f}(X(s)) \,ds$ as defined in \eqref{eqn:M}.
Now we observe that the second moment of $t^{-1}(\hat{f}(X(t)) - \hat{f}(X(0))) Z(t)$ vanishes as $t \to \infty$, which can be shown easily
using the same argument
as in the proof of Theorem~\ref{thm:unbiased-estimator}. 
Therefore, we only show how to bound the second moment of $t^{-1}M(t)Z(t)$.
First, by the Cauchy-Schwartz inequality we have
\[
t^{-2}\mathbb{E}\{M^2(t)Z^2(t)\} \leq t^{-2}\sqrt{\mathbb{E}M^4(t) \mathbb{E}Z^4(t)}\PERIOD 
\]
Since $M(t)$ is a martingale it follows by the BDG inequality
\[
\mathbb{E}M^4(t) \leq C \mathbb{E}\left\{\left(\sum_{j = 1}^m \int_0^{t} |\Delta_j \hat{f}(X(s-))|^2 \, dR_j(t)\right)^2 \right\}\PERIOD 
\]
Next we recall $R_j(t) = Y_j(t) + \int_0^t a_j(X(s)) ds$ and thus
\begin{equation*}
\begin{split}
\mathbb{E}M^4(t) &\leq m \sum_{j=1}^m 
\mathbb{E}\left\{\left(\int_0^t |\Delta_j \hat{f}(X(s-))|^2 \, dR_j(s)\right)^2\right\}\\
&\leq 2m \sum_{j=1}^m \mathbb{E} \left\{\left(\int_0^t |\Delta_j \hat{f}(X(s-))|^2 \, dY_j(s)\right)^2
+ 
\left(\int_0^t |\Delta_j \hat{f}(X(s))|^2 \, a_j(X(s))ds\right)^2\right\}\PERIOD 
\end{split}
\end{equation*}
We estimate the two expectations at the right hand side separately. 
For the first expectation we apply the BDG inequality again to obtain
\begin{equation}\label{eqn:var-CLR-estimate1}
\begin{split}
\mathbb{E} \left\{\left(\int_0^t |\Delta_j \hat{f}(X(s-))|^2 \, dY_j(s)\right)^2\right\} &\leq 
C\mathbb{E} \left\{\int_0^t |\Delta_j \hat{f}(X(s-))|^4 dR_j(s)\right\}\\
& \leq C\mathbb{E}\left\{\int_0^t |\Delta_j \hat{f}(X(s))|^4 a_j(X(s)) ds\right\}\\
&\leq C \mathbb{E}\left\{\int_0^t |\Delta_j \hat{f}(X(s))|^4  ds\right\}\COMMA
\end{split}
\end{equation}
where the last inequality holds since intensities are uniformly bounded.
As for the second expectation, by Jensen's inequality we obtain 
\[
\left(\int_0^t |\Delta_j \hat{f}(X(s-))|^2 \, a_j(X(s))ds\right)^2
\leq t \int_0^t |\Delta_j \hat{f}(X(s-))|^4 a_j^2(X(s)) ds\COMMA
\]
and hence
\begin{equation}\label{eqn:var-CLR-estimate2}
\mathbb{E} \left\{
\left(\int_0^t |\Delta_j \hat{f}(X(s-))|^2 \, a_j(X(s))ds\right)^2\right\}
\leq C t \mathbb{E}\left\{\int_0^t |\Delta_j \hat{f}(X(s-))|^4 ds\right\}\PERIOD
\end{equation}
Combining \eqref{eqn:var-CLR-estimate1} and \eqref{eqn:var-CLR-estimate2} 
and the fact that $|\Delta_j \hat{f}(x)| \leq C V(x)$ we have
\begin{equation}\label{eqn:var-CLR-EM4}
    \mathbb{E}M^4(t) \leq 2m C (t + 1) \sum_{j=1}^m \mathbb{E}\left\{\int_0^t |\Delta_j \hat{f}(X(s))|^4 ds\right\}
    \leq 2m^2 C (t + 1) \mathbb{E}\left\{\int_0^t V(X(s)) ds\right\}\PERIOD
\end{equation}
Using exactly the same argument we can bound $\mathbb{E}Z^4(t)$ by
\begin{equation*}\label{eqn:var-CLR-EZ4}
    \mathbb{E}Z^4(t) \leq C(t^2 + t)\PERIOD
\end{equation*}
Therefore, we have
\begin{equation*}
\begin{split}
    t^{-2}\sqrt{\mathbb{E}M^4(t) \mathbb{E}Z^4(t)} \leq C\sqrt{1+t^{-1}} \sqrt{(t^{-1} + t^{-2})\mathbb{E}\left\{\int_0^t V(X(s)) ds\right\} }\PERIOD
\end{split}
\end{equation*}
Taking $t \to \infty$ on both sides and using the fact that 
$\lim_{t\to \infty} t^{-1} \mathbb{E}\{\int_0^t V(X(s)) ds\} = \pi_{c^*}(V)$ (by Theorem~\ref{thm:V-ergodic}),
\[
\limsup_{t \to \infty}t^{-2}\sqrt{\mathbb{E}M^4(t) \mathbb{E}Z^4(t)} \leq C \sqrt{\pi_{c^*}(V)} < \infty\COMMA 
\]
which yields the desired result.
\end{proof}


The order of the LR estimator variance can be obtained easily based on the above result.   
\begin{corollary}\label{cor:LR-var}
Suppose all intensities are uniformly bounded. Then there exists a constant $K>0$ such that 
\[
\limsup_{t \to \infty} \frac{1}{t}\VAR\{\mathcal{S}_{\LR}(t)\} \leq K\PERIOD 
\]
That is, the variance of the LR estimator is at most of order $\mathcal{O}(t)$.
\end{corollary}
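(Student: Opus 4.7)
The plan is to reduce the statement directly to Theorem~\ref{thm:CLR-var} by a simple algebraic decomposition of $\mathcal{S}_{\LR}(t)$ into the CLR estimator plus a residual that is easy to control. Concretely, observe that
\[
\mathcal{S}_{\LR}(t) - \mathcal{S}_{\CLR}(t) = \frac{Z(t)}{t}\int_0^t \pi(f)\,ds = \pi(f)\,Z(t),
\]
so $\mathcal{S}_{\LR}(t) = \mathcal{S}_{\CLR}(t) + \pi(f) Z(t)$. Applying the standard inequality $\VAR(A+B) \le 2\VAR(A) + 2\VAR(B)$ yields
\[
\VAR\{\mathcal{S}_{\LR}(t)\} \le 2\VAR\{\mathcal{S}_{\CLR}(t)\} + 2\pi(f)^2 \VAR\{Z(t)\}.
\]

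Next I would handle each term. By Theorem~\ref{thm:CLR-var}, $\limsup_{t\to\infty} \VAR\{\mathcal{S}_{\CLR}(t)\} \leq K$ is bounded, so dividing by $t$ sends the first term to zero. For the second term I reuse the estimate already exploited in the proof of Theorem~\ref{thm:unbiased-estimator}: since $Z(t)$ is a mean-zero $\mathcal{F}_t$-martingale with $\langle Z, Z\rangle(t) = c_1^{-2}\int_0^t a_1(X(s))\,ds$, the BDG inequality (or a direct computation of the predictable quadratic variation) gives
\[
\VAR\{Z(t)\} = \mathbb{E} Z(t)^2 \le \frac{C}{c_1^2}\, \mathbb{E}\!\left\{\int_0^t a_1(X(s))\,ds\right\}.
\]
Since $a_1$ is uniformly bounded (or, more generally, $\sqrt{V}$-bounded by Assumption~\ref{assume:intensity-bounded}) and $t^{-1}\mathbb{E}\{\int_0^t a_1(X(s))\,ds\} \to \pi(a_1) < \infty$ by \eqref{eqn:intensity-ergodic}, it follows that $t^{-1}\VAR\{Z(t)\}$ is uniformly bounded in $t$.

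Combining these two bounds,
\[
\frac{1}{t}\VAR\{\mathcal{S}_{\LR}(t)\} \le \frac{2}{t}\VAR\{\mathcal{S}_{\CLR}(t)\} + \frac{2\pi(f)^2}{t}\VAR\{Z(t)\},
\]
whose right-hand side remains bounded as $t\to\infty$, yielding the claimed $\mathcal{O}(t)$ bound on $\VAR\{\mathcal{S}_{\LR}(t)\}$ with an explicit constant of the form $2\pi(f)^2 \pi(a_1)/c_1^2$ up to the BDG constant.

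There is no real obstacle here: the corollary is essentially a bookkeeping step. The only thing to be careful about is that the decomposition introduces no martingale cross-term issues, which is automatic because we are bounding the variance rather than trying to identify the exact leading coefficient. If one wanted a sharp asymptotic constant, the main work would be computing $\lim_t t^{-1}\VAR\{\mathcal{S}_{\LR}(t)\}$ exactly via $\pi(f)^2 \sigma_{22}^2(1)$ using Corollary~\ref{cor:LR-weak-limit}, but for the stated upper bound the above two-line argument suffices.
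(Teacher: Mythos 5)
Your proposal is correct and follows essentially the same route as the paper: the identical decomposition $\mathcal{S}_{\LR}(t) = \mathcal{S}_{\CLR}(t) + \pi(f)Z(t)$, the elementary quadratic inequality, Theorem~\ref{thm:CLR-var} for the centered term, and the bound $t^{-1}\mathbb{E}\{Z(t)^2\} \to c_1^{-2}\pi(a_1)$ already established in the proof of Theorem~\ref{thm:unbiased-estimator}. The only cosmetic difference is that you work with variances while the paper bounds second moments, which is equivalent here since the mean of $\mathcal{S}_{\LR}(t)$ is bounded and $Z(t)$ is centered.
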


\begin{proof}
Note that 
\[
\mathcal{S}_{\LR}(t) = \mathcal{S}_{\CLR}(t) + \pi(f) Z(t)\COMMA 
\]
and hence 
\[
t^{-1}\mathbb{E}\{\mathcal{S}_{\LR}(t)^2\} \leq 
2t^{-1}\mathbb{E}\{\mathcal{S}_{\CLR}(t)^2\} 
+ 2t^{-1}\pi(f)^2\mathbb{E}\{Z^2(t)\}\PERIOD 
\]
We recall that $\lim_{t \to \infty}t^{-1}\mathbb{E}\{Z^2(t)\}$ exists (see, proof of Theorem~\ref{thm:unbiased-estimator}) and $\mathbb{E}\{\mathcal{S}_{\CLR}(t)^2\} = \mathcal{O}(1)$ by Theorem~\ref{thm:CLR-var}. Thus
the result follows by taking $t \to \infty$ on both sides of the above inequality. 
\end{proof}

The next theorem says that the variance of the integral type CLR estimator does not increase in time. 
\begin{theorem}\label{thm:integral-CLR-var}
Suppose all intensities are uniformly bounded. Then there exists a constant $K >0$ such that 
\[
\limsup_{t \to \infty} \VAR\{\tilde{\mathcal{S}}_{\CLR}(t)\} \leq K\PERIOD 
\]
That is, the variance of the integral type CLR estimator is at most of order $\mathcal{O}(1)$.
\end{theorem}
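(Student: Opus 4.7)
The plan is to reduce the claim to Theorem~\ref{thm:CLR-var} via a single integration by parts that rewrites $\tilde{\mathcal{S}}_{\CLR}(t)$ as $\mathcal{S}_{\CLR}(t)$ plus a stochastic integral against $Z$. Set $F(s) \triangleq \int_0^s (f(X(u)) - \pi(f))\, du$, which is adapted, continuous, of finite variation, and vanishes at $0$. Because $F$ has no jumps, the quadratic covariation $[Z, F]$ is identically zero; the semimartingale product rule applied to $Z(t)F(t)$ together with the fact that the jump set of $Z$ has Lebesgue measure zero yields
\[
\int_0^t (f(X(s)) - \pi(f))\, Z(s)\, ds \;=\; \int_0^t Z(s^-)\, dF(s) \;=\; Z(t)F(t) - \int_0^t F(s)\, dZ(s)\PERIOD
\]
Dividing by $t$ and recognizing that $t^{-1} Z(t) F(t) = \mathcal{S}_{\CLR}(t)$ gives the key decomposition
\[
\tilde{\mathcal{S}}_{\CLR}(t) = \mathcal{S}_{\CLR}(t) - \frac{1}{t}\int_0^t F(s)\, dZ(s)\PERIOD
\]

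By $\VAR(A - B) \leq 2\VAR(A) + 2\VAR(B)$ and Theorem~\ref{thm:CLR-var}, it suffices to show that the second moment of $N(t) \triangleq \int_0^t F(s)\, dZ(s)$ is at most of order $\mathcal{O}(t^2)$. Since $Z$ is a zero mean $L^2$-martingale with predictable quadratic variation $\langle Z, Z \rangle(s) = c_1^{-2}\int_0^s a_1(X(u))\, du$ and $F$ is adapted and continuous (hence predictable and locally bounded), $N$ is a local martingale, and the It\^o isometry---established via the same localization and BDG argument already used in the proof of Theorem~\ref{thm:CLR-var}---produces
\[
\mathbb{E}\{N(t)^2\} = \frac{1}{c_1^2}\mathbb{E}\left\{\int_0^t F(s)^2\, a_1(X(s))\, ds\right\} \leq \frac{C}{c_1^2}\int_0^t \mathbb{E}\{F(s)^2\}\, ds\COMMA
\]
where the last inequality uses the assumed uniform boundedness of $a_1$.

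The only step that is more than mechanical is the linear-in-$s$ estimate $\mathbb{E}\{F(s)^2\} \leq C(1 + s)$, and this is where I expect the bookkeeping to concentrate. Using the Poisson equation \eqref{eqn:Possion} and the definition \eqref{eqn:M} of $M$, one has the identity $F(s) = M(s) - \hat{f}(X(s)) + \hat{f}(X(0))$, so that $\mathbb{E}\{F(s)^2\} \leq 3\mathbb{E}\{M(s)^2\} + 3\mathbb{E}\{\hat{f}(X(s))^2\} + 3\hat{f}(X(0))^2$. The last two terms are bounded uniformly in $s$ since $|\hat{f}|^2$ is $V$-bounded by Theorem~\ref{thm:Poisson-solution} and $X(t)$ is $V$-ergodic by Theorem~\ref{thm:V-ergodic}, while the martingale second moment $\mathbb{E}\{M(s)^2\} = \sum_{j=1}^m \mathbb{E}\int_0^s |\Delta_j \hat{f}(X(u))|^2 a_j(X(u))\, du$ grows at most linearly in $s$ because $|\Delta_j \hat{f}|^2 a_j$ is $V$-bounded under Assumption~\ref{assume:increment-bounded}, Theorem~\ref{thm:Poisson-solution}, and the uniform bound on $a_j$. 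Plugging back gives $\mathbb{E}\{N(t)^2\} \leq C t^2$, hence $\VAR\{t^{-1}N(t)\} = \mathcal{O}(1)$, and the theorem follows. The main obstacle is essentially to make sure that the moment estimates align so that the same relaxation of the uniform boundedness of intensities to $|a_j|_{V^{1/4}} < \infty$ noted in the remark after Theorem~\ref{thm:CLR-var} transfers to this result as well.
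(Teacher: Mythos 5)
Your proposal is correct and follows essentially the same route as the paper: the same decomposition $\tilde{\mathcal{S}}_{\CLR}(t)=\mathcal{S}_{\CLR}(t)-t^{-1}\int_0^t F(s)\,dZ(s)$ (the paper obtains it by interchanging the order of integration rather than by integration by parts), the same reduction to Theorem~\ref{thm:CLR-var}, and the same Poisson-equation splitting $F=M-\hat{f}(X(\cdot))+\hat{f}(X(0))$ combined with BDG/It\^o-isometry bounds; the only difference is that you apply the isometry to $\int F\,dZ$ before splitting $F$, whereas the paper splits first and then bounds $\int M\,dZ$, which yields the same $\mathcal{O}(1)$ estimate.
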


\begin{proof}
We recall that the alternative representation of the integral type CLR estimator and the CLR estimator are
\[
\tilde{\mathcal{S}}_{\CLR}(t) =  \frac{1}{t}\int_0^t \int_u^t (f(X(s)) - \pi(f)) \,ds\, dZ(u)
\]
and 
\[
\mathcal{S}_{\CLR}(t) =  \frac{1}{t}\int_0^t \int_0^t (f(X(s)) - \pi(f)) \,ds\, dZ(u)\COMMA 
\]
respectively.
It follows that 
\[
\tilde{\mathcal{S}}_{\CLR}(t) =\mathcal{S}_{\CLR}(t) + \frac{1}{t} \int_0^t \int_0^u (f(X(s)) - \pi(f)) \,ds\, dZ(u)\PERIOD 
\]
Note that the second moment of $\mathcal{S}_{\CLR}(t)$ is $\mathcal{O}(1)$, hence 
we shall only bound the second moment of $\int_0^t \int_0^u (f(X(s)) - \pi(f)) \,ds\, dZ(u)$. 
Using the Poisson equation and rearranging the terms  we have
\[
\frac{1}{t} \int_0^t \int_0^u (f(X(s)) - \pi(f)) \,ds\, dZ(u) = 
\frac{1}{t}\int_0^t \hat{f}(X(u)) - \hat{f}(X(0)) dZ(u)
-\frac{1}{t}\int_0^t M(u) dZ(u)\COMMA
\]
where $M(t)$ is defined in \eqref{eqn:M}.
It suffices to estimate the second term on the right-hand side of the above equality. 
Since $\int_0^t M(u) dZ(u)$ is an $\mathcal{F}_t$-local martingale by the BDG inequality and 
$a_j$ is uniformly bounded we have
\begin{equation*}
\begin{split}
    \frac{1}{t^2}\mathbb{E}\left\{\left(\int_0^t M(u) dZ(u)\right)^2\right\}
    \leq \frac{1}{t^2}\mathbb{E}\left\{ \int_0^t M^2(u) dR_1(u)\right\}
    &\leq \frac{C}{t^2} \int_0^t \mathbb{E}\{M^2(u)\} du\\
    &\leq \frac{C}{t}\mathbb{E}\left\{\left(\sup_{u \leq t}|M(u)|\right)^2\right\}\PERIOD
\end{split}    
\end{equation*}
Applying the BDG inequality again
\[\frac{C}{t}\mathbb{E}\left\{\left(\sup_{u \leq t}|M(u)|\right)^2\right\} \leq \frac{C}{t}\sum_{j=1}^m\mathbb{E}\left\{\int_0^t |\Delta_j \hat{f}(X(s))|^2 ds\right\} 
\leq \frac{mC}{t} \mathbb{E}\left\{\int_0^t V(X(s))^{1/2}ds\right\}\COMMA
\]
where the last term converges to $m C \pi_{c^*}(V^{1/2})$ as $t \to \infty$.
Therefore, the variance of $\tilde{\mathcal{S}}_{\CLR}(t)$ is at most $\mathcal{O}(1)$.
\end{proof}

Based on the above result we can easily see that $\VAR\{\tilde{\mathcal{S}}_{\LR}(t)\}$ is at most $\mathcal{O}(t)$.
We omit the proof since the argument follows the same line as the proof
of Corollary~\ref{cor:LR-var}.
\begin{corollary}\label{cor:integral-LR-var}
Suppose all intensities are uniformly bounded. Then there exists some constant $K >0$ such that 
\[
\limsup_{t \to \infty} \frac{1}{t}\VAR\{\tilde{\mathcal{S}}_{\LR}(t)\} \leq K\PERIOD 
\]
That is, the variance of the integral type LR estimator is at most of order $\mathcal{O}(t)$.
\end{corollary}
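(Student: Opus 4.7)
The plan is to mirror the proof of Corollary~\ref{cor:LR-var} by decomposing the integral type LR estimator into the integral type CLR estimator plus a scalar multiple of a time-averaged martingale. Adding and subtracting $\pi(f) Z(s)$ inside the integral gives
\[
\tilde{\mathcal{S}}_{\LR}(t) = \tilde{\mathcal{S}}_{\CLR}(t) + \frac{\pi(f)}{t}\int_0^t Z(s)\,ds,
\]
so the elementary inequality $(a+b)^2 \le 2a^2 + 2b^2$ yields
\[
\frac{1}{t}\VAR\{\tilde{\mathcal{S}}_{\LR}(t)\} \le \frac{2}{t}\VAR\{\tilde{\mathcal{S}}_{\CLR}(t)\} + \frac{2\pi(f)^2}{t}\,\mathbb{E}\!\left\{\left(\frac{1}{t}\int_0^t Z(s)\,ds\right)^{\!2}\right\}.
\]
By Theorem~\ref{thm:integral-CLR-var} the first term on the right-hand side is $\mathcal{O}(t^{-1})$, so only the second term requires attention.

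For that term I would apply Cauchy--Schwarz (equivalently Jensen's inequality with respect to the uniform probability measure on $[0,t]$) to push the square inside the integral:
\[
\mathbb{E}\!\left\{\left(\frac{1}{t}\int_0^t Z(s)\,ds\right)^{\!2}\right\} \le \frac{1}{t}\int_0^t \mathbb{E}\{Z(s)^2\}\,ds.
\]
The BDG estimate used in the proof of Theorem~\ref{thm:unbiased-estimator} gives $\mathbb{E}\{Z(s)^2\} \le \frac{K}{c_1^2}\mathbb{E}R_1(s) = \frac{K}{c_1^2}\int_0^s \mathbb{E}\{a_1(X(u))\}\,du$, and by \eqref{eqn:intensity-ergodic} the integrand converges to the finite constant $\pi(a_1)$ as $u\to\infty$. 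Hence $\mathbb{E}\{Z(s)^2\} \le Cs$ uniformly for some constant $C>0$, and consequently
\[
\frac{1}{t}\int_0^t \mathbb{E}\{Z(s)^2\}\,ds \le \frac{C t}{2}.
\]
Multiplying by $\pi(f)^2/t$ produces an $\mathcal{O}(1)$ contribution, which completes the argument after taking $\limsup_{t\to\infty}$.

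I do not expect any genuine obstacle here: once Theorem~\ref{thm:integral-CLR-var} is in hand the decomposition reduces the problem to a second-moment bound for a time average of the martingale $Z$, and the two available powers of $t$ (one from the BDG growth of $\mathbb{E}\{Z(s)^2\}$, one from the integration in $s$) are exactly cancelled by the two factors of $t^{-1}$ coming from the $t^{-1}$ prefactor of $\tilde{\mathcal{S}}_{\LR}(t)$ and the normalization by $t^{-1}$ appearing in the statement. This also clarifies why the integral type LR estimator cannot be improved beyond $\mathcal{O}(t)$: the residual $\pi(f)\,t^{-1}\int_0^t Z(s)\,ds$ has second moment of order $t$, which is strictly larger than the $\mathcal{O}(1)$ second moment of $\pi(f)Z(t)/t$ that drives Corollary~\ref{cor:LR-var}.
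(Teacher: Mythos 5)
Your argument is correct and is precisely the route the paper intends: the paper omits this proof, saying only that it ``follows the same line as the proof of Corollary~\ref{cor:LR-var}'', namely the decomposition $\tilde{\mathcal{S}}_{\LR}(t)=\tilde{\mathcal{S}}_{\CLR}(t)+\pi(f)\,t^{-1}\int_0^t Z(s)\,ds$ followed by Theorem~\ref{thm:integral-CLR-var} for the centered part and a BDG/ergodicity bound $\mathbb{E}\{Z(s)^2\}\leq Cs$ for the residual, exactly as you carry out. The only blemish is your closing aside, which is immaterial to the proof: the residual in Corollary~\ref{cor:LR-var} is $\pi(f)Z(t)$ (not $\pi(f)Z(t)/t$), and its second moment is also of order $t$, so the two non-centered estimators are of the same order rather than one being strictly worse.
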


Theorems~\ref{thm:CLR-var} and \ref{thm:integral-CLR-var} show that the variance of the CLR estimator $\mathcal{S}_{\CLR}(t)$ 
and the int-CLR estimator $\tilde{\mathcal{S}}_{\CLR}(t)$ do not grow in $t$
and hence they outperform the LR and int-LR estimators.  
One may wonder, by comparing the CLR and int-CLR estimators, which of them works better.
In order to answer this question we compare the variances of their limiting distributions (see Section~\ref{sec:weak-limit}), i.e., their asymptotic variances. 
Due to the fact that the limiting distributions we derived are exactly the same as those in the discrete time setting
we can cite the result of Glynn and Olvera-Cravioto \cite{glynn2017likelihood}. 

\begin{theorem}[Glynn and Olvera-Cravioto \cite{glynn2017likelihood}]\label{thm:asymptotic-var}
Let $(W_1, W_2)^T$ be the two-dimensional Brownian motion defined in Theorem~\ref{thm:joint-weak-convergence}.
Denote 
\[
\mathcal{S}_{\CLR}^{\infty} \triangleq W_1(1) W_2(1)
\]
and 
\[
\tilde{\mathcal{S}}_{\CLR}^{\infty} \triangleq \int_0^1 (W_1(1) - W_1(s)) dW_2(s)
\]
the limiting distributions of the CLR estimator $\mathcal{S}_{\CLR}(t)$ and integral type CLR estimator $\tilde{\mathcal{S}}_{\CLR}(t)$, respectively. 
Then we have
\[
\VAR\{\tilde{\mathcal{S}}_{\CLR}^{\infty}\} \leq \frac{1}{2}\VAR\{\mathcal{S}_{\CLR}^{\infty}\}\PERIOD 
\]
\end{theorem}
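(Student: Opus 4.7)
The plan is to rewrite both $\mathcal{S}_{\CLR}^{\infty}$ and $\tilde{\mathcal{S}}_{\CLR}^{\infty}$ as Itô integrals driven by the two-dimensional Brownian motion $W = (W_1, W_2)^T$, and then evaluate their variances explicitly. For brevity, write $s_{ij} \triangleq \sigma_{ij}^2(1)$, so that $\VAR(W_i(1)) = s_{ii}$, $\operatorname{Cov}(W_1(1), W_2(1)) = s_{12}$, and the cross-variation satisfies $d[W_i, W_j]_t = s_{ij}\, dt$.

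The key step is Itô's integration-by-parts formula applied to the continuous semimartingales $W_1, W_2$:
\[
W_1(1) W_2(1) = \int_0^1 W_1(s)\, dW_2(s) + \int_0^1 W_2(s)\, dW_1(s) + s_{12}.
\]
The anticipating expression $\int_0^1 (W_1(1) - W_1(s))\, dW_2(s)$ appearing in the definition of $\tilde{\mathcal{S}}_{\CLR}^{\infty}$ is read as $W_1(1) W_2(1) - \int_0^1 W_1(s)\, dW_2(s)$; this matches the limiting Riemann-sum construction behind Corollary~\ref{cor:integral-CLR-weak-limit} and turns the second piece into a bona fide adapted Itô integral. Substituting the product formula yields the compact representation
\[
\tilde{\mathcal{S}}_{\CLR}^{\infty} = \int_0^1 W_2(s)\, dW_1(s) + s_{12}.
\]

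With these representations in hand, the two variances follow by short computations. For $\mathcal{S}_{\CLR}^{\infty} = W_1(1) W_2(1)$, the Isserlis/Wick identity for centered jointly Gaussian variables $E[X^2 Y^2] = E[X^2] E[Y^2] + 2 E[XY]^2$ gives
\[
\VAR\{\mathcal{S}_{\CLR}^{\infty}\} = s_{11} s_{22} + s_{12}^2.
\]
For $\tilde{\mathcal{S}}_{\CLR}^{\infty}$, the additive constant $s_{12}$ drops out of the variance, and Itô's isometry against $d[W_1, W_1]_s = s_{11}\, ds$ combined with $\mathbb{E}[W_2(s)^2] = s_{22}\, s$ yields
\[
\VAR\{\tilde{\mathcal{S}}_{\CLR}^{\infty}\} = s_{11} \int_0^1 s_{22}\, s\, ds = \tfrac{1}{2} s_{11} s_{22}.
\]

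Comparing the two expressions produces the desired inequality
\[
\VAR\{\tilde{\mathcal{S}}_{\CLR}^{\infty}\} = \tfrac{1}{2} s_{11} s_{22} \leq \tfrac{1}{2}\bigl(s_{11} s_{22} + s_{12}^2\bigr) = \tfrac{1}{2} \VAR\{\mathcal{S}_{\CLR}^{\infty}\},
\]
with equality exactly when $s_{12} = 0$, i.e.\ when $W_1$ and $W_2$ are uncorrelated. The only delicate point is fixing the correct interpretation of the anticipating stochastic integral appearing in $\tilde{\mathcal{S}}_{\CLR}^{\infty}$; once the product-formula reduction converts it into an ordinary adapted Itô integral plus a deterministic constant, the remainder is just two standard applications of the Gaussian moment identity and Itô's isometry.
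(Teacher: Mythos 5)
Your proposal is correct, and it is worth noting that the paper itself does not prove this statement at all: it imports the result verbatim from Glynn and Olvera-Cravioto, so there is no in-paper argument to compare against. Your self-contained derivation is the natural one and all steps check out. The interpretation of the anticipating integral as $W_1(1)W_2(1)-\int_0^1 W_1(s)\,dW_2(s)$ is the right one (it is exactly how the limit arises in Corollary~\ref{cor:integral-CLR-weak-limit}, where the prelimit integrand splits into a constant-in-$v$ term plus an adapted term), the product formula with $[W_1,W_2](1)=\sigma_{12}^2(1)$ converts it to $\int_0^1 W_2\,dW_1+\sigma_{12}^2(1)$, and the two variance computations via Wick's identity and the It\^o isometry give $\VAR\{\mathcal{S}_{\CLR}^{\infty}\}=\sigma_{11}^2(1)\sigma_{22}^2(1)+\bigl(\sigma_{12}^2(1)\bigr)^2$ and $\VAR\{\tilde{\mathcal{S}}_{\CLR}^{\infty}\}=\tfrac12\sigma_{11}^2(1)\sigma_{22}^2(1)$, from which the claimed inequality is immediate (with equality iff the cross-covariance vanishes; note that $\sigma_{12}^2(1)$ may be negative despite the notation, but only its square enters). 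Your argument in fact proves slightly more than the stated bound, since it identifies both asymptotic variances exactly.
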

This result suggests that the asymptotic variance of the int-CLR estimator is at least twice lower than that of the CLR estimator. 
We will demonstrate this reduction of variance by a numerical benchmark of the next section.


\section{Numerical benchmarks}\label{sec:numerics}
We illustrate the convergence and the dependence of the variance on time for the four analyzed LR estimators through two numerical examples. 
The scripts and Matlab functions that were used to create the numerical experiments described in this paper can be found at 
\url{https://github.com/ting-w/LR-sensitivity}.

\subsection{A linear system}
We consider a simple linear reaction network that consists of four reactions and three species.
The reaction channels, intensities, parameters and stoichiometric vectors are listed in Tab~\ref{tab:network}.
\begin{table}[ht]
\centering
\caption{A linear reaction network.}
\begin{tabular}{cccc}
\hline
\hline
Reaction & Intensity   & Parameter & Stoichiometric vector\\
\hline
$S_1 \xrightarrow{c_1} S_2$  & $a_1(x, c) = c_1 x_1 $  & $c_1 = 10.0$ & $\nu_1=(-1, 1, 0)^T$\\
\hline
$S_2 \xrightarrow{c_2} S_1$  & $a_2(x, c) = c_2 x_2$  & $c_2 = 20.0$ & $\nu_2 = (1, -1, 0)^T$\\
\hline
$S_2 \xrightarrow{c_3} S_3$  & $a_3(x, c) = c_3 x_2$  & $c_3= 0.03$ & $\nu_3 = (0, -1, 1)^T$\\
\hline
$S_3 \xrightarrow{c_4} S_2$  & $a_4(x, c) = c_4 x_3$  & $c_4= 0.02$ & $\nu_4 = (0, 1, -1)^T$\\
\hline
\hline
\end{tabular}
\label{tab:network}
\end{table}
We apply the LR, int-LR, CLR, and int-CLR estimators to estimate the sensitivity of $\pi(x_1)$ (i.e., the steady-state mean population of $S_1$) with respect to the parameter $c_3$.  
We set the initial population as $(5, 5, 0)^T$.
Since the intensities are all linear the sensitivity of $\pi(x_1)$ can be calculated exactly.
The estimated sensitivities from $\mathcal{S}_{\LR}(t)$, $\tilde{\mathcal{S}}_{\LR}(t)$, $\mathcal{S}_{\CLR}(t)$,
and $\tilde{\mathcal{S}}_{\CLR}(t)$
are compared with the exact sensitivity.
Here the terminal time is varied from $t = 100$ to $t=1000$ to test the convergence of the four estimators. 
For each fixed terminal time $t$ we sample $N = 10^4$ realizations for each estimator and compute the sample average.

\begin{figure}[ht]
\centering
  \includegraphics[height=5cm]{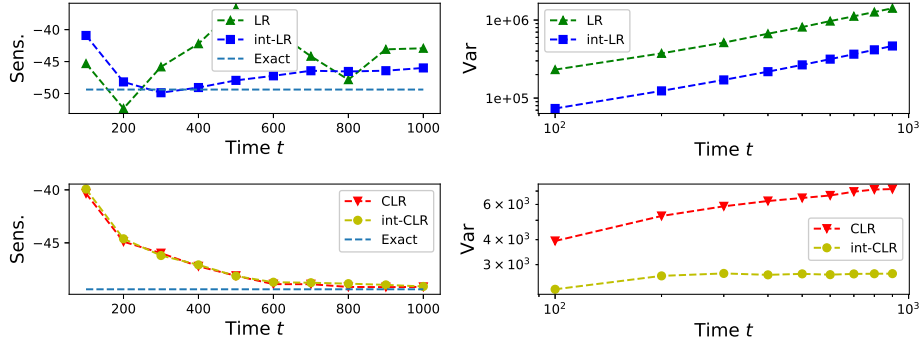}
\caption{Sensitivity estimation for ${\partial \pi(x_1)}/{  \partial c_3}$. Left: plot of the estimated sensitivity. 
Right: log-log plot of the estimated variances. }
\label{fig:sensitivity}
\end{figure}

The simulation results are shown in Fig~\ref{fig:sensitivity}. 
From the left plot of Figure~\ref{fig:sensitivity} we can see that the estimated sensitivities from the LR and int-LR estimators fluctuate even for very large terminal time $t$.
However, we see a fast convergence of the sensitivities estimated from the CLR and int-CLR estimators. 
This observation suggests that the centered estimators tend to have much smaller variance. This is also demonstrated in the right plot of Fig~\ref{fig:sensitivity}, where the growth of the estimator variance against time is shown on the log-log scale. It can be seen that the variance growth is roughly linear (with slope $1$) in $t$ for both the LR and integral type LR estimators, which is consistent with the result proved in Corollaries~\ref{cor:LR-var} and \ref{cor:integral-LR-var}. Similarly, as predicted by Theorems~\ref{thm:CLR-var} and \ref{thm:integral-CLR-var} the variance of the CLR and integral type CLR estimators are constant for large $t$. 
Finally, we observe that in the large $t$ regime the variance of the CLR estimator is more than twice larger than that of the int-CLR estimator, which 
numerically confirms Theorem~\ref{thm:asymptotic-var}.

\begin{table}[ht]
\centering
\caption{Reactions and intensities of the two gene complex system.}
\begin{tabular}{p{5cm}ll}
\hline
\hline
Reaction & \multicolumn{2}{c}{Intensity}    \\
\hline
$\emptyset \xrightleftharpoons[a_2]{a_1} m_A$  & $a_1 = k_r \frac{\phi^4}{\phi^4 + p_{A_2}^4},$ & $a_2 = k_{dr}m_A $  \\
$\emptyset \xrightleftharpoons[a_4]{a_3} p_A$  & $a_3 = k_p m_A,$ & $a_4 = k_{dp} p_A$  \\
$p_A + p_A \xrightleftharpoons[a_6]{a_5} p_{A_2}$  & $a_5 = k_1 p_A (p_A - 1),$ &  $a_6 = k_2 p_{A_2}$  \\
$\emptyset \xrightleftharpoons[a_8]{a_7} m_B$  & $a_7 = k_r \frac{\phi^2}{\phi^2 + p_{AB}^2},$ &  
$a_8 = k_{dr} m_B$  \\
$\emptyset \xrightleftharpoons[a_{10}]{a_9} p_B$  & $a_9 = k_p m_B ,$ & 
$a_{10} = k_{dp} p_B$  \\
$p_A + p_B \xrightleftharpoons[a_{12}]{a_{11}} p_{AB}$  & $a_{11} = k_3 p_A p_B, $ & 
$a_{12} = k_4 p_{AB}$  \\
\hline
\hline
\end{tabular}
\label{tab:six-dim-network}
\end{table}

\subsection{A two gene complex system}
To further demonstrate the performance of the sensitivity estimators we consider a complex biochemical reaction system which models the interaction between two genes $A$ and $B$ \cite{milias2014fast}.
The system contains $6$ species that are evolving according to $12$ reactions.
The reactions and their corresponding intensity functions are listed in Tab~\ref{tab:six-dim-network}.
There are $9$ parameters whose values are set as follows:
\[[k_r, \phi, k_{dr}, k_p, k_{dp}, k_1, k_2, k_3, k_4] = [1, 60, 0.1, 1, 0.5, 0.02, 0.08, 0.02, 0.1].\]
We aim to estimate the sensitivity of $\pi(\#p_{AB})$ with respect to each of the parameters and test the dependence
of the estimator variances on time. 
The initial state of the six-dimensional vector is set to 
\[[m_A(0), p_A(0), p_{A_2}(0), m_B(0), p_B(0), p_{AB}(0)] = [0, 0, 0, 0, 0, 0].\]
We test the performance of the four estimators with terminal times $t = 2.5\times10^4$ in order to approximate 
the steady-state sensitivity. 
For each fixed terminal time, we repeat $N = 10^5$ times to obtain the ensemble average for each estimator.
Note that we are able to estimate the sensitivity with respect to all $9$ parameters simultaneously, 
which is also one of the advantages of the LR approach over other sensitivity estimation methods in problems where
the parameter space is high dimensional. 

The estimated sensitivities along with their associated $95\%$ confidence intervals with terminal time $t = 2.5\times10^4$ are summarized in Tab~\ref{tab:six-dim-network-sensitivity}. 
We observe that even with the sample number as large as $N = 10^5$ the sensitivities estimated by the LR and int-LR estimators 
are still completely off with overwhelmingly large confidence intervals. However, the sensitivities estimated by the 
CLR and int-CLR estimators have very tight confidence intervals suggesting that the simulations results 
are statistically correct. 
This observation can be explained by the large variances associated with the LR and int-LR estimators which increase
linearly in time as suggested by Corollaries~\ref{cor:LR-var} and \ref{cor:integral-LR-var}. 
On the other hand, the variances of the CLR and int-CLR estimators are constant in time (see Theorem~\ref{thm:CLR-var} and Theorem~\ref{thm:integral-CLR-var}).

\begin{table}[!ht]
\centering
\setlength\tabcolsep{-100pt}
\setlength\extrarowheight{2pt}
\caption{Sensitivities and the associated confidence intervals for ${\partial \pi(\# p_{AB})}/{  \partial k_r}$ with $t = 2.5\times10^4$.}
\label{tab:six-dim-network-sensitivity}
\begin{tabular*}{\textwidth}
{
  @{\extracolsep{\fill}}
  l
  S[table-format=-2.2(5)]
  S[table-format=-2.2(5)]
  S[table-format=-2.2(5)]
  S[table-format=-2.2(5)]
  @{}
}
\hline
\hline
  & {LR} & {int-LR} & {CLR}  & {int-CLR}   \\
\hline
$k_r$       & 42.64\pm44.84      & 34.17\pm25.96      & 32.97\pm0.42    & 33.03\pm0.26\\
$\phi$ & 0.65\pm0.72        & 0.49\pm0.42        & 0.37\pm0.01     & 0.38\pm0.00 \\
$k_{dr}$    & -767.60\pm447.31   & -630.23\pm258.33   & -326.35\pm4.16  & -327.77\pm2.59\\
$k_p$       & 79.72\pm142.00     & 7.84\pm82.00       & 32.02\pm1.17    & 32.36\pm0.82\\
$k_{dp}$    & -18.92\pm283.28    & -99.84\pm163.91    & -64.65\pm2.34   & -64.48\pm1.64\\
$k_1$       & -1233.63\pm3733.53 & -760.17\pm2154.54  & -411.09\pm30.58 & -415.91\pm21.62\\
$k_2$       & 11.10\pm930.79     & 100.09\pm537.39    & 103.98\pm7.63   & 103.80\pm5.39\\
$k_3$       & 260.09\pm3726.63   & 336.88\pm2150.83   & 1212.21\pm31.14 & 1227.17\pm21.43\\
$k_4$       & 9.31\pm742.05      & -182.66\pm428.07   & -244.20\pm6.27  & -246.08\pm4.31\\
\hline
\hline
\end{tabular*}
\end{table}

To further confirm the above observation, we plot the variances of the four estimators with varying terminal times
$t = 5.0\times10^3, 1.0\times10^4, 1.5\times10^4, 2.0\times10^4$ and $2.5\times10^4$ in Fig~\ref{fig:sixD_sensitivity}. 
Here we only demonstrate the result with respect to the parameter $k_r$.
From the left plot we can see that the two centered LR estimators give consistent results even when $t = 5000$.
However, the sensitivities estimated by the LR and int-LR estimators change with time, indicating that the associated variances are large. 
This is confirmed by the variance plot (in log-log) at the right hand side of Figure~\ref{fig:sixD_sensitivity}.
The variances of LR and int-LR estimators grow linearly in time while those of CLR and int-CLR estimators remain roughly constant in time. At the time $t = 2.5\times10^4$, the centered estimators achieve a variance reduction that is up to the order of $10^4$ over the noncentered ones. 
Furthermore, the variance of int-CLR is less than half of the variance of CLR as predicted by Theorem~\ref{thm:asymptotic-var}.

\begin{figure}[ht]
\centering
  \includegraphics[height=5cm]{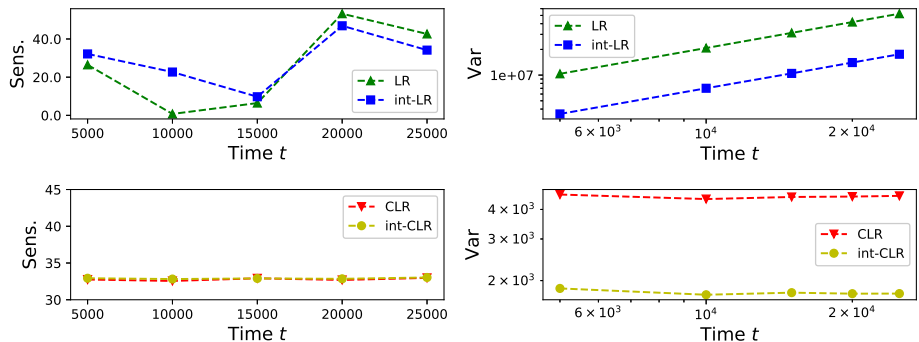}
\caption{Sensitivity estimation for ${\partial \pi(\# p_{AB})}/{  \partial k_r}$. Left: plot of the estimated sensitivity. Right: log-log plot of the estimated variances.}
\label{fig:sixD_sensitivity}
\end{figure}


\appendix

\section{Burkholder--Davis--Gundy inequality}\label{app_BDG}
We include the BDG inequality for the convenience of reference.  
The case $p>1$ was established by Burkholder in \cite{burkholder1966martingale} and the case
$p=1$ was obtained by Davis in \cite{davis1970intergrability}.

{\bf The BDG inequality.} {\rm Let $Y$ be a local martingale and let $\tau$ be a stopping time. 
Define $Y^*(\tau) = \sup_{t \leq \tau}|Y(t)|$.
Then for any $p \geq 1$ there exist constants $k_p$ and $K_p$ such that
\[
k_p\mathbb{E}\{([Y, Y](\tau))^{p/2}\} \leq \mathbb{E}\{(Y^*(\tau))^p\} \leq K_p\mathbb{E}\{([Y, Y](\tau))^{p/2}\}\PERIOD 
\]
}

\section{The existence of Lyapunov function}
It can be seen from our analysis that the Foster-Lyapunov drift condition is crucial for the justification of using the 
LR method for the steady-state sensitivity analysis. 
However, the natural question is whether there exists such $V$.
Here we provide an easy-to-verify condition which allows us to construct $V$ explicitly.
We point out that this choice of $V$ may not be unique.

{\bf The existence of $V$.} {\rm For a positive vector $v \in \mathbb{R}^n$, 
there exists $\alpha_1 > 0$, $0<\alpha_2 < \infty$
such that 
\begin{equation}\label{eqn:verifiable-drift-condition}
    \sum_{j=1}^m a_j(x) \langle v, \nu_j \rangle \leq - \alpha_1 \langle v, x \rangle-\alpha_1+ \alpha_2 \mathbbm{1}_{\{0\}}(x)\COMMA 
\end{equation}
where $\langle \cdot, \cdot \rangle$ is the inner product on $\mathbb{R}^n$.}

It is easy to see that when we define $V(x) = 1 + \langle v, x \rangle $,
\eqref{eqn:verifiable-drift-condition} is equivalent to \eqref{eqn:V-drift} in Assumption~\ref{assume:drift-condition}.
A similar condition with \eqref{eqn:verifiable-drift-condition} is used in \cite{gupta2014scalable} to study the longtime behavior of stochastic reaction kinetics. See \cite{gupta2014scalable} for examples about how to choose the vector $v$ so that 
\eqref{eqn:verifiable-drift-condition} holds.

\section*{Acknowledgment}

The work has been partially supported by the U.S. Department of Energy, Office of Science, Office of Advanced Scientific Computing Research, Applied Mathematics program under the contract number DE-SC0010549 and by the DARPA project W911NF-15-2-0122.

%
%

\bibliographystyle{abbrv}
\bibliography{LR}

\begin{thebibliography}{10}

\bibitem{anderson2012efficient}
D.~F. Anderson.
\newblock An efficient finite difference method for parameter sensitivities of
  continuous time markov chains.
\newblock {\em SIAM Journal on Numerical Analysis}, 50(5):2237--2258, 2012.

\bibitem{arampatzis2016efficient}
G.~Arampatzis, M.~A. Katsoulakis, and L.~Rey-Bellet.
\newblock Efficient estimators for likelihood ratio sensitivity indices of
  complex stochastic dynamics.
\newblock {\em Journal of Chemical Physics}, 144(10):104107, 2016.

\bibitem{asmussen2007stochastic}
S.~Asmussen and P.~W. Glynn.
\newblock {\em Stochastic Simulation: Algorithms and Analysis}, volume~57.
\newblock Springer, New York, 2007.

\bibitem{billingsley2013convergence}
P.~Billingsley.
\newblock {\em Convergence of Probability Measures}.
\newblock John Wiley \& Sons, New York, 2013.

\bibitem{bremaud1981point}
P.~Br{\'e}maud.
\newblock {\em Point Processes and Queues: Martingale Dynamics}.
\newblock Springer, New York, 1981.

\bibitem{burkholder1966martingale}
D.~L. Burkholder.
\newblock Martingale transforms.
\newblock {\em The Annals of Mathematical Statistics}, 37(6):1494--1504, 1966.

\bibitem{chatterjee2007overview}
A.~Chatterjee and D.~G. Vlachos.
\newblock An overview of spatial microscopic and accelerated kinetic monte
  carlo methods.
\newblock {\em Journal of computer-aided materials design}, 14(2):253--308,
  2007.

\bibitem{davis1970intergrability}
B.~Davis.
\newblock On the intergrability of the martingale square function.
\newblock {\em Israel Journal of Mathematics}, 8(2):187--190, 1970.

\bibitem{ethier2009markov}
S.~N. Ethier and T.~G. Kurtz.
\newblock {\em Markov Processes: Characterization and Convergencem}, volume
  282.
\newblock John Wiley \& Sons, New York, 2009.

\bibitem{gillespie2007stochastic}
D.~T. Gillespie.
\newblock Stochastic simulation of chemical kinetics.
\newblock {\em Annual Review of Physical Chemistry}, 58:35--55, 2007.

\bibitem{glasserman1991gradient}
P.~Glasserman.
\newblock {\em Gradient estimation via perturbation analysis}, volume 116.
\newblock Springer, New York, 1991.

\bibitem{glasserman1991strongly}
P.~Glasserman, J.-Q. Hu, and S.~G. Strickland.
\newblock Strongly consistent steady-state derivative estimates.
\newblock {\em Probability in the Engineering and Informational Sciences},
  5(4):391--413, 1991.

\bibitem{glynn1990likelihood}
P.~W. Glynn.
\newblock Likelihood ratio gradient estimation for stochastic systems.
\newblock {\em Communications of the ACM}, 33(10):75--84, 1990.

\bibitem{glynn1996liapounov}
P.~W. Glynn and S.~P. Meyn.
\newblock A {L}iapounov bound for solutions of the {P}oisson equation.
\newblock {\em The Annals of Probability}, pages 916--931, 1996.

\bibitem{glynn2017likelihood}
P.~W. Glynn and M.~Olvera-Cravioto.
\newblock Likelihood ratio gradient estimation for steady-state parameters.
\newblock {\em Stochastic Systems}, pages 1--18, 2019.

\bibitem{gupta2014scalable}
A.~Gupta, C.~Briat, and M.~Khammash.
\newblock A scalable computational framework for establishing long-term
  behavior of stochastic reaction networks.
\newblock {\em PLoS computational biology}, 10(6):e1003669, 2014.

\bibitem{gupta2018computational}
A.~Gupta and M.~Khammash.
\newblock Computational identification of irreducible state-spaces for
  stochastic reaction networks.
\newblock {\em SIAM Journal on Applied Dynamical Systems}, 17(2):1213--1266,
  2018.

\bibitem{hashemi2016stochastic}
A.~Hashemi, M.~Nunez, P.~Plechac, and D.~G. Vlachos.
\newblock Stochastic averaging and sensitivity analysis for two scale reaction
  networks.
\newblock {\em The Journal of Chemical Physics}, 144(7):074104, 2016.

\bibitem{kitano2002computational}
H.~Kitano.
\newblock Computational systems biology.
\newblock {\em Nature}, 420(6912):206, 2002.

\bibitem{kurtz1991weak}
T.~G. Kurtz and P.~Protter.
\newblock Weak limit theorems for stochastic integrals and stochastic
  differential equations.
\newblock {\em The Annals of Probability}, pages 1035--1070, 1991.

\bibitem{liu2015perturbation}
Y.~Liu.
\newblock Perturbation analysis for continuous-time {M}arkov chains.
\newblock {\em Science China Mathematics}, 58(12):2633--2642, 2015.

\bibitem{mcadams1997stochastic}
H.~H. McAdams and A.~Arkin.
\newblock Stochastic mechanisms in gene expression.
\newblock {\em Proceedings of the National Academy of Sciences},
  94(3):814--819, 1997.

\bibitem{MT1993stability}
S.~P. Meyn and R.~L. Tweedie.
\newblock Stability of {M}arkovian processes \romannumeral 3:
  {F}oster--{L}yapunov criteria for continuous-time processes.
\newblock {\em Advances in Applied Probability}, 25(3):518--548, 1993.

\bibitem{milias2014fast}
A.~Milias-Argeitis, J.~Lygeros, and M.~Khammash.
\newblock Fast variance reduction for steady-state simulation and sensitivity
  analysis of stochastic chemical systems using shadow function estimators.
\newblock {\em The Journal of Chemical Physics}, 141(2):024104, 2014.

\bibitem{plyasunov2007efficient}
S.~Plyasunov and A.~P. Arkin.
\newblock Efficient stochastic sensitivity analysis of discrete event systems.
\newblock {\em Journal of Computational Physics}, 221(2):724--738, 2007.

\bibitem{protter2005stochastic}
P.~E. Protter.
\newblock {\em Stochastic Integration and Differential Equations}.
\newblock Springer, New York, 2005.

\bibitem{rathinam2010efficient}
M.~Rathinam, P.~W. Sheppard, and M.~Khammash.
\newblock Efficient computation of parameter sensitivities of discrete
  stochastic chemical reaction networks.
\newblock {\em Journal of Chemical Physics}, 132(3):034103, 2010.

\bibitem{rogers1994diffusions}
L.~C.~G. Rogers and D.~Williams.
\newblock {\em Diffusions, Markov Processes and Martingales: It{\^o} calculus},
  volume~2.
\newblock Cambridge University Press, Cambrudge, UK, 1994.

\bibitem{wang2016efficiency}
T.~Wang and M.~Rathinam.
\newblock Efficiency of the {G}irsanov transformation approach for parametric
  sensitivity analysis of stochastic chemical kinetics.
\newblock {\em SIAM/ASA Journal on Uncertainty Quantification},
  4(1):1288--1322, 2016.

\bibitem{warren2012steady}
P.~B. Warren and R.~J. Allen.
\newblock Steady-state parameter sensitivity in stochastic modeling via
  trajectory reweighting.
\newblock {\em The Journal of Chemical Physics}, 136(10):03B603, 2012.

\bibitem{whitt2007proofs}
W.~Whitt.
\newblock Proofs of the martingale {FCLT}.
\newblock {\em Probability Surveys}, 4:268--302, 2007.

\end{thebibliography}

\end{document}